\documentclass{amsart}
\usepackage{config}
\usepackage{tcolorbox}

        	\definecolor{jazzberryjam}{rgb}{0.65, 0.04, 0.37}

\newtheorem{thmA}{Theorem}

\author[F. J. Cruz-Zamorano]{Francisco J. Cruz-Zamorano}
\address[F. J.  Cruz-Zamorano]{Departamento de An\'alisis Matem\'atico e IMAULL,  Universidad de La Laguna, Avenida Astrof\'isico Francisco S\'anchez, s/n, 38206 San Crist\'obal de La Laguna, Santa Cruz de Tenerife,  Spain}
\email{fcruzzam@ull.edu.es}

\author[E. Dellepiane]{Eugenio Dellepiane}
\address[E. Dellepiane]{University of Twente, Zilverling (building no. 11), Hallenweg 19, 7522 NH Enschede, Netherlands} \email{e.dellepiane@utwente.nl; dellepianeeugenio@gmail.com}

\author[D. Seco]{Daniel Seco}
\address[D. Seco]{Departamento de An\'alisis Matem\'atico e IMAULL,  Universidad de La Laguna, Avenida Astrof\'isico Francisco S\'anchez, s/n, 38206 San Crist\'obal de La Laguna, Santa Cruz de Tenerife,  Spain}
\email{dsecofor@ull.edu.es}

\thanks{Cruz-Zamorano was supported by Grant PID2025-167700NB-I00 funded by MICIU/AEI/10.13039/501100011033 and by FEDER, UE, through the Generaci\'on de Conocimiento programme of Spanish Ministry of Science, Innovation and Universities. Dellepiane is member of Gruppo Nazionale per l'Analisi Matematica, la Probabilità e le loro Applicazioni (GNAMPA) of Istituto Nazionale di Alta Matematica (INdAM).  Seco was supported by Grant PID2024-160185NB-I00 funded by MICIU/AEI/10.13039/501100011033 and by FEDER, UE, through the Generaci\'on de Conocimiento programme of Spanish Ministry of Science, Innovation and Universities; and by grant RYC2021-034744-I of the Ram\'on y Cajal programme from Agencia Estatal de Investigaci\'on (Spanish Ministry of Science, Innovation and Universities).}

\keywords{Composition operator, weighted Hardy space, compact operator, Schatten class}
\subjclass[2020]{Primary 47B33; Secondary 30H10} 

\title[Compact and H--S composition operators on w. Hardy spaces]{Compact and Hilbert--Schmidt composition operators on weighted Hardy spaces}
\date{\today}

\begin{document}
\begin{abstract}
Lef\`evre, Li, Queff\'elec, and Rodr\'iguez-Piazza \cite{LLQRP} recently characterized the sequences $\beta$ for which the weighted Hardy space $H^2(\beta)$ has the same bounded composition operators as the Hardy space $H^2$. We investigate the analogous questions for the compactness and the $p$-Schatten classes: for which sequences $\beta$ does the space $H^2(\beta)$ have the same compact or $p$-Schatten class composition operators as $H^2$?

\end{abstract}

\maketitle

\section{Introduction}
Let $\beta = (\beta_n)_{n \in \N}$ be a sequence of positive numbers (also called a weight). The weighted Hardy space $H^2(\beta)$ consists of those analytic functions $f$ for which
\[
\norm{f}_{\beta}^2 := \sum_{n = 0}^{\infty}\beta_n|\widehat{f}(n)|^2 < + \infty, \qquad f(z) = \sum_{n = 0}^{\infty}\widehat{f}(n)z^n,\,\,z\in\mathbb{D}.
\]
This class of weighted Hardy spaces was considered, for instance, by Katsnel'son \cite{Katsnelson} and Shields \cite{Shields}, and it contains many important spaces of analytic functions. Among the most remarkable, when $\beta=\mathbf{1}$ is the sequence that is constantly $1$, then $H^2 := H^2(\mathbf{1})$ is the classical Hardy space. Taking $\beta_n = (n+1)^{\alpha}$, one obtains the weighted Dirichlet spaces $\mathcal{D}_\alpha$. Finally, if $\beta_n = \binom{\gamma + n+1}{n}^{-1}$, one obtains the weighted Bergman spaces $A^2_\gamma$.

It is known \cite[p. 1921]{LLQRP} that every function in $H^2(\beta)$ is holomorphic on the open unit disk $\D = \{z \in \C : \abs{z} < 1\}$ if and only if the weight $\beta$ satisfies
\begin{equation}
\label{eq:cond-D}
\liminf_{n \to \infty} \beta_n^{1/n} \geq 1.
\end{equation}
Our main aim is to study composition operators with symbols in $\operatorname{Hol}(\mathbb{D},\mathbb{D})$ acting on these spaces, so we will always assume that \eqref{eq:cond-D} holds.

Recall that, given a holomorphic self-map of the unit disk $\varphi\colon\mathbb{D}\to\mathbb{D}$, the composition operator $C_\varphi \colon \operatorname{Hol}(\mathbb D, \C)\to\operatorname{Hol}(\mathbb D,\C)$ is given by
$$C_{\varphi}(f) = f \circ \varphi, \qquad f \in \operatorname{Hol}(\mathbb D,\C),$$
where $\operatorname{Hol}(\mathbb D,\C)$ denotes the spaces of all holomorphic maps on $\D$.

The properties of these operators have been extensively studied on various spaces (we refer to \cite{CowenMacCluer,Shapiro-Book} for detailed information). They are particularly well-studied on the Hardy space $H^2$. For instance, as a consequence of the classical Littlewood subordination principle, it is known that all composition operators $C_\varphi$ act boundedly on $H^2$ \cite[p. 16]{Shapiro-Book}. This feature of the Hardy space naturally raises the following question: for which other weights $\beta$ does it hold that every symbol $\varphi\colon\mathbb{D}\to\mathbb{D}$ induces a bounded operator $C_\varphi\colon H^2(\beta)\to H^2(\beta)$? This question was originally raised by Zorboska \cite[Problem 1 in p. 49]{Zorboska}. In the recent paper \cite{LLQRP}, Lef\`evre, Li, Queff\'elec, and Rodr\'iguez-Piazza answered this problem with the following result.

\begin{thmA}
\label{T:rodriguezPiazza}
{\rm \cite[Theorem 1.2]{LLQRP}}
Let $\beta=(\beta_n)_{n \in \N}$ be a weight. Then, the composition operator $C_\varphi\colon H^2(\beta) \to H^2(\beta)$ is bounded for every symbol $\varphi\colon\mathbb{D}\to\mathbb{D}$ if and only if $\beta$ satisfies the following properties:
\begin{enumerate}[\hspace{0.5cm}\rm(i)]
\item $\beta$ is \emph{essentially decreasing}, that is, there exists a constant $C\geq 1$ such that for all $m\geq n\geq 0$,
$$\beta_m\leq C\beta_n;$$
\item $\beta$ is \emph{slowly oscillating}, that is, there exist positive constants $c<1<C$ such that
$$c\leq \frac{\beta_m}{\beta_n}\leq C \qquad \text{when}\quad 0 \leq \frac n2 \leq m \leq 2n.$$
\end{enumerate}
\end{thmA}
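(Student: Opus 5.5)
We plan to prove the two implications separately. Under the assumption \eqref{eq:cond-D}, boundedness of $C_\varphi$ for every symbol will be tested on two families of symbols. Rotations and automorphisms will not detect (i) or (ii), so we use the dilations $\varphi_r(z)=rz$ together with nonlinear symbols. For sufficiency we reduce to the Hardy space by a Littlewood-type subordination argument adapted to the weight.

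\emph{Necessity.} We first show (i). Fix $n\le m$. The key idea is to construct a symbol $\varphi$, for example a suitable lens map or $\varphi(z)=\frac{1+z}{2}$, such that $C_\varphi z^m$ has a significant Taylor coefficient at index $n$. If $\beta$ were not essentially decreasing, the ratios $\beta_m/\beta_n$ would be unbounded. Using a single symbol together with a gliding-hump/closed-graph argument, we would then get $\|C_\varphi z^{m}\|_\beta^2\ge \beta_n|\widehat{(\varphi^m)}(n)|^2$ large compared with $\beta_m$, a contradiction. A cleaner approach is to use $\varphi=\frac{1+z}{2}$: since $\varphi^m=2^{-m}(1+z)^m$ has coefficients $2^{-m}\binom{m}{k}$, which are of size about $m^{-1/2}$ near $k=m/2$, we obtain lower bounds of the form $\beta_m\gtrsim m^{-1}\sum_{|k-m/2|\lesssim\sqrt m}\beta_k$. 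We expect to combine this with a symbol fixing $0$, such as $\varphi(z)=z\,\frac{1+z}{2}$, whose powers $z^m((1+z)/2)^m$ concentrate near index $3m/2$. By the uniform boundedness principle applied to a suitable family, or to a single symbol glued from these, these comparisons should give $c\le\beta_m/\beta_n\le C$ for $n\le m\le 2n$, which is (ii). Using $1/\varphi$-type reversal, i.e.\ the adjoint, or symbols $z^k$, which give $\beta_{km}$ versus $\beta_m$ bounds, we then get monotone control and hence (i). Converting the local estimate near $m/2$ into pointwise comparisons is delicate and is the main obstacle. We would try to first prove (i) via $C_{z^2}$-type symbols combined with averaging, and then deduce the pointwise ratio bound.

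\emph{Sufficiency.} Assume (i) and (ii). The strategy is to replace $\beta$ by an equivalent weight $\tilde\beta_n=\int_0^1 r^{2n}\,d\mu(r)$ plus a constant multiple of the $H^2$ part, i.e.\ to write $\|f\|_\beta^2\asymp \int_{[0,1]}\|f_r\|_{H^2}^2\,d\mu(r)$ with $f_r(z)=f(rz)$ and $\mu$ a positive measure possibly with an atom at $1$. This should be possible because a slowly oscillating, essentially decreasing weight is comparable to a regularized decreasing weight, obtained by dyadic blocks and smoothing. Such a weight is then comparable to the moments of a measure, via a Hausdorff-type construction that uses piecewise definition on dyadic scales. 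With this representation, boundedness follows from the Schwarz lemma together with Littlewood subordination applied to $f_r\circ\varphi$: writing $\varphi(rz)$ compared with $r\varphi$, one uses $|\varphi(rz)|$ and an automorphism adjustment for $\varphi(0)\neq0$ to get $\|(f\circ\varphi)_r\|_{H^2}\lesssim\|f_{\rho(r)}\|_{H^2}$ with $1-\rho(r)\asymp 1-r$. Property (ii) then makes $\mu$ doubling-like, so this change of radius costs only a constant. We expect the moment-representation step to be the most technical part here.
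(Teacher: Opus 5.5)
The paper contains no proof of this statement. It is quoted from \cite[Theorem 1.2]{LLQRP}, so your proposal can only be judged on its own merits.

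The sufficiency half is a sound route. Let $\tilde\beta_n=\sup_{m\ge n}\beta_m$ and take the measure $\mu=\sum_{k\ge0}(\tilde\beta_{2^k}-\tilde\beta_{2^{k+1}})\delta_{r_k}+(\lim_n\tilde\beta_n)\delta_1$, with $r_k^2=1-2^{-k-1}$. Its moments satisfy $\int r^{2n}\,d\mu\asymp\beta_n$, precisely because of (i) and (ii). Schwarz--Pick then gives $\varphi(r\mathbb{D})\subseteq\rho\,\mathbb{D}$ with $\rho=\frac{r+|\varphi(0)|}{1+r|\varphi(0)|}$. Littlewood subordination for $r\ge 1/2$, together with (ii), finishes the argument. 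The necessity half, however, has genuine gaps.

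\textbf{Necessity of (i).} Your inequality points the wrong way. If $n\le m$ and $\beta_m/\beta_n$ is huge, then $\beta_n|\widehat{\varphi^m}(n)|^2\le\beta_n$ is \emph{small} compared with $\beta_m$. So moving mass from $z^m$ down to index $n$ contradicts nothing. To bound $\beta_m$ by $\beta_n$, you need $\varphi^n$ to put mass at indices $m\gg n$, which gives $\beta_m|\widehat{\varphi^n}(m)|^2\le\|C_\varphi\|^2\beta_n$.

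None of your symbols achieves this with uniform constants:
\begin{itemize}
\item $\frac{1+z}{2}$, lens maps, $z\frac{1+z}{2}$, automorphisms and each single $z^k$ all have finite valence. Hence they are bounded on the Dirichlet space $\beta_n=n+1$, which is not essentially decreasing.
\item The bounds $\beta_{km}\le\|C_{z^k}\|^2\beta_m$ have $k$-dependent constants. Since the hypothesis is purely qualitative, there is no pointwise bounded family for the uniform boundedness principle.
\item The adjoint only sums the same entries $\frac{\beta_m}{\beta_n}|\widehat{\varphi^n}(m)|^2$ along rows, and ``$1/\varphi$-reversal'' has no meaning.
\end{itemize}
What is missing is one symbol of unbounded valence, for example an inner $u$ with $u(0)=0$ built by a gliding hump, such that $\sum_k\beta_k|\widehat{u^{n_j}}(k)|^2/\beta_{n_j}\to\infty$ along pairs with $\beta_{m_j}/\beta_{n_j}\to\infty$.

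\textbf{Necessity of (ii).} Testing $\frac{1+z}{2}$ or $z\frac{1+z}{2}$ on monomials cannot succeed, whatever ``conversion'' you apply. Since $\|\varphi^m\|_2^2\asymp m^{-1/2}$, every such inequality carries the $\sqrt m$ loss. Consider the decreasing weight $\beta_n=1/j!$ for $4^j\le n<4^{j+1}$. It satisfies all of these inequalities, both $\|\varphi^m\|_\beta^2\le C\beta_m$ and the row versions, because $m^{-1/2}\le 2^{-j}$ swamps jumps of size $j+1$. Yet $\beta_n/\beta_{2n}=j+1$ at $n=4^{j+1}-1$. Symbols fixing $0$ are bounded on every decreasing weight (write $\|f\|_\beta^2$ through partial sums), so they can never detect (ii).

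Once (i) is known, you need test functions that keep their $H^2$-mass after composition. One option is $f=z^Nk_a$, with $k_a(z)=\sqrt{1-a^2}/(1-az)$ and $1-a=N^{-1/2}$. Then $\|f\|_\beta^2\le C\beta_N$, while $f\circ\frac{1+z}{2}$ has $H^2$-mass $\asymp 1$ at frequencies $\le N/2+A\sqrt N$. This yields $\beta_{N/2+A\sqrt N}\le C'\beta_N$, and (ii) follows by combining this with (i). Alternatively, use automorphisms, whose powers are inner with their mass spread over frequencies comparable to $n$. Contrary to your opening remark, automorphisms are well suited to detecting (ii).
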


Introducing the set
\[
\operatorname{B}(\beta)=\{\varphi\colon\mathbb{D}\to\mathbb{D} \,\,|\,\, C_\varphi\colon H^2(\beta)\to H^2(\beta)\,\,\text{is bounded}
\},
\]
we can reformulate Theorem \ref{T:rodriguezPiazza} as follows: the identity $\operatorname{B}(\beta)=\operatorname{B}(\mathbf{1})$ holds if and only if $\beta$ is essentially decreasing and slowly oscillating. This point of view leads us to pose the following general question.

\textbf{Question.} For a given operator-theoretic property, when does the class of composition operators on $H^2$ with such a property coincide with the analogous class on $H^2(\beta)$?

In this paper we investigate this question for the case of the compactness and the Schatten classes of composition operators on $H^2(\beta)$. Many references deal with these properties on several function spaces. For instance, the characterization of compact composition operators on $H^2$ is a celebrated result by Shapiro \cite{Shapiro}, who used the so-called Nevanlinna counting function. His ideas have been greatly studied by other authors. For instance, Kellay and Lef\`evre \cite{KL-Compact} generalized Shapiro's result to weighted Bergman spaces $A^2_w$ defined by radial weights, which are a proper subclass of the spaces $H^2(\beta)$ (see \cite[p. 1922]{LLQRP}). Luecking and Zhu \cite{LueckingZhu} also obtained an analogous characterization of the $p$-Schatten classes of composition operators on~$H^2$.

Before presenting our main results, let us introduce some notation. Given a weight $\beta$ and $1\leq p<\infty$, we consider
\begin{align*}
    \operatorname{K}(\beta)&=\{\varphi\colon\mathbb{D}\to\mathbb{D} \,\,|\,\, C_\varphi\colon H^2(\beta)\to H^2(\beta)\,\,\text{is compact}
\},\\
\operatorname{S}_p(\beta) &=\{\varphi\colon\mathbb{D}\to\mathbb{D} \,\,|\,\, C_\varphi\colon H^2(\beta)\to H^2(\beta)\,\,\text{is in the}\,\,p\text{-Schatten class}
\}.
\end{align*}
For $p=2$, we also write $\operatorname{HS}(\beta)=\operatorname{S}_2(\beta)$ for the set of symbols that induce Hilbert--Schmidt composition operators on $H^2(\beta)$.  As hinted before, this article is devoted to the following questions:

\begin{itemize}
\item[(\textbf{Q1})] For which weights $\beta$ does the equality $\operatorname{K}(\beta)=\operatorname{K}(\mathbf{1})$ hold?
\item[(\textbf{Q2})] Fix $1\leq p<\infty$. For which weights $\beta$ does the equality $\operatorname{S}_p(\beta)=\operatorname{S}_p(\mathbf{1})$ hold?
\end{itemize}

We now present our main findings. First of all, we remark that in order to have either one of the desired equalities, $\beta$ is necessarily bounded (we will show this in Proposition \ref{P:betabounded}). Under the extra assumption that $\beta$ is equivalent to a weight which converges to some limit, a very rigid phenomenon occurs. This is the case, for instance, if $\beta$ is essentially decreasing \cite[p. 1925]{LLQRP}. We are ready to state our first main result. We say that $\beta \simeq \beta'$ if there exist $c,C > 0$ such that $c \beta'_n \leq \beta_n \leq C\beta'_n$ for every $n\in\mathbb{N}$. In such a case, $H^2(\beta) = H^2(\beta')$ with equivalent norms.

\begin{theorem}
\label{T:main1}
Consider a weight $\beta$. Suppose that $\beta\simeq\beta'$, where $\beta'$ is a weight for which $\lim_{n \to \infty}\beta'_n$ exists (e.g., $\beta$ is essentially decreasing). Then, the following are equivalent: 
\begin{enumerate}[\hspace{0.5cm}\rm(a)]
\item $\operatorname{K}(\beta)=\operatorname{K}(\mathbf{1})$. 
\item $\operatorname{S}_p(\beta)=\operatorname{S}_p(\mathbf{1})$ for some (hence, all) $p \geq 1$.
\item $\beta\simeq \mathbf{1}$.
\end{enumerate}
\end{theorem}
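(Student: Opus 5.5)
The direction (c)$\Rightarrow$(a),(b) is immediate. If $\beta\simeq\mathbf{1}$, then $H^2(\beta)=H^2$ with equivalent norms. Compactness is invariant under passing to an equivalent norm. So is membership in $S_p$: the identity map is an isomorphism $J\colon H^2\to H^2(\beta)$, and $C_\varphi$ on $H^2(\beta)$ equals $J C_\varphi J^{-1}$, with $C_\varphi$ acting on $H^2$; Schatten ideals are stable under composition with bounded operators. Hence $\operatorname{K}(\beta)=\operatorname{K}(\mathbf{1})$ and $\operatorname{S}_p(\beta)=\operatorname{S}_p(\mathbf{1})$ for every $p$. The real content is (a)$\Rightarrow$(c) and (b)$\Rightarrow$(c). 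We may replace $\beta$ by $\beta'$ throughout, since all three conditions are invariant under $\simeq$. So we assume $L=\lim_n\beta_n$ exists in $[0,\infty]$.

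By Proposition \ref{P:betabounded} (announced in the introduction), either (a) or (b) forces $\beta$ to be bounded, so $L<\infty$. If $L>0$, then since every $\beta_n>0$ and $\beta_n\to L\in(0,\infty)$, the sequence is bounded above and below by positive constants, so $\beta\simeq\mathbf{1}$. It therefore remains to exclude $L=0$, i.e.\ $\beta_n\to 0$. This is the main obstacle.

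To rule out $\beta_n\to0$, I would exhibit a symbol in $\operatorname{S}_p(\beta)$ (hence, in particular, in $\operatorname{K}(\beta)$) that is not even compact on $H^2$. This contradicts both (a) and (b) simultaneously. The natural candidate is the identity $\varphi(z)=z$. It is clearly not compact on $H^2$. On $H^2(\beta)$, however, $C_\varphi=I$, which is not compact on any infinite-dimensional space, so the identity is useless. One needs a more clever symbol, one with contact with the boundary that is harmless for $H^2$-noncompactness but whose composition operator on $H^2(\beta)$ gains from the decay of $\beta$.

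One option is a symbol $\varphi$ with $|\varphi^*|=1$ on a set of positive measure, so that $C_\varphi\notin\operatorname{K}(\mathbf{1})$. A typical choice is an inner function, or $\varphi(z)=(1+z)/2$-type maps composed suitably. We then want $C_\varphi$ to be Hilbert--Schmidt on $H^2(\beta)$, that is,
\[
\sum_n \frac{\|\varphi^n\|_\beta^2}{\beta_n}<\infty ,
\]
using the orthonormal basis $z^n/\sqrt{\beta_n}$. For an inner $\varphi$ vanishing to high order at $0$, say $\varphi(z)=z^k$, we get $\|\varphi^n\|_\beta^2=\beta_{kn}$. The sum becomes $\sum_n\beta_{kn}/\beta_n$, which need not converge under mere convergence to $0$. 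So I expect one must first use (a) or (b) to extract more regularity from $\beta$. For example, I would test against explicit $H^2$-compact (or $S_p$) symbols such as lens maps or $\varphi(z)=sz$, $s<1$, whose membership in $\operatorname{K}(\beta)$ is then forced, to derive inequalities of the form $\beta_{m}\lesssim \beta_n$ or quantitative decay. Then I would choose $k$ (or a sequence of blocks) so that $\beta_{kn}/\beta_n$ is summable, possibly after passing to a subsequence and building $\varphi$ as a lacunary product. I expect this construction, which balances the decay of $\beta$ against a fixed non-compact $H^2$ symbol, to be the crux of the argument.
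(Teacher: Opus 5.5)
Your reductions are correct and coincide with the paper's. The direction (c)$\Rightarrow$(a),(b) is immediate, you may replace $\beta$ by $\beta'$, Proposition~\ref{P:betabounded} rules out $L=\infty$, and $L\in(0,\infty)$ gives $\beta\simeq\mathbf{1}$.

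The gap is the only nontrivial case, $\beta_n\to0$. You leave it as a plan, and the plan points the wrong way. Monomial symbols cannot work in general: for $\beta_n=1/\log(n+2)$ one has $\|C_{z^k}e_n^\beta\|_\beta^2=\beta_{kn}/\beta_n\to1$, so $C_{z^k}$ is not even compact on $H^2(\beta)$. This weight is essentially decreasing and slowly oscillating, so $\operatorname{B}(\beta)=\operatorname{B}(\mathbf{1})$ by Theorem~\ref{T:rodriguezPiazza}. Hence extracting regularity from (a) or (b) via test symbols cannot rescue that approach. In fact the paper needs nothing beyond $\beta_n\to0$ and \eqref{eq:cond-D}, which together give $\beta_n^{1/n}\to1$.

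The missing idea, which is Proposition~\ref{prop::limzero}, is to make the $\ell^2$-mass of $B^n$ escape to frequencies where $\beta$ is much smaller than $\beta_n$. This uses an infinite Blaschke product whose maximum modulus tends to $1$ as slowly as needed (Lemma~\ref{lemma:Jones}). Set $M:=\sup_n\beta_n$ and $\varepsilon_n=\beta_n/(n+1)^4$. Choose $L_n$ such that $\sup_{m\ge L_n}\beta_m\le\varepsilon_n/2$, and let $r_n=2^{-1/(2L_n)}$. One then takes $B$ with $M(r_n,B)\le(\varepsilon_n/4M)^{1/(2n)}$. This is possible because the right-hand side tends to $1$; pass to its running infimum to get a non-decreasing profile. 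Then
\[
\sum_{m<L_n}|\widehat{B^n}(m)|^2\le r_n^{-2L_n}\|B^n(r_n\,\cdot)\|_2^2\le 2M(r_n,B)^{2n}\le\frac{\varepsilon_n}{2M}.
\]
The frequencies $m\ge L_n$ contribute at most $\tfrac{\varepsilon_n}{2}\|B^n\|_2^2=\tfrac{\varepsilon_n}{2}$ to $\|B^n\|_\beta^2$, so $\|B^n\|_\beta^2\le\varepsilon_n$.

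Note also that aiming for Hilbert--Schmidt, as you do, is insufficient. If (b) holds only for some $p\in[1,2)$, a symbol in $\operatorname{S}_2(\beta)\setminus\operatorname{K}(\mathbf{1})$ yields no contradiction. The paper obtains trace class through Lemma~\ref{lemma:S1}, since $\sum_n\|B^n\|_\beta/\sqrt{\beta_n}\le\sum_n(n+1)^{-2}<\infty$. Finally, an inner function is never in $\operatorname{K}(\mathbf{1})$.
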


Of course, our result does not fully answer the previously stated questions. In fact, obtaining the full answer seems difficult. For the boundedness, Theorem~\ref{T:rodriguezPiazza} already reveals that full characterizations may depend on fine properties concerning the growth and the oscillation of the weight $\beta$. However, it is perhaps surprising that the compactness and the Schatten class properties of different spaces are not distinguished under a mild assumption on the weight, such as being essentially decreasing. In particular, this observation leads to a characterization of the classical Hardy space $H^2$ among all the weighted spaces $H^2(\beta)$.

\begin{corollary}
\label{cor:h2}
Let $\beta$ be a weight. Assume that $\operatorname{B}(\beta) = \operatorname{B}(\mathbf{1})$ and $\operatorname{K}(\beta) = \operatorname{K}(\mathbf{1})$. Then, $\beta \simeq \mathbf{1}$. In particular, $H^2(\beta) = H^2$ with equivalent norms.
\end{corollary}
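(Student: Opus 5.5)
This follows by combining Theorem~\ref{T:rodriguezPiazza} with Theorem~\ref{T:main1}; the only work is checking that the hypothesis of Theorem~\ref{T:main1} holds. First, since $\operatorname{B}(\beta)=\operatorname{B}(\mathbf{1})$ and every self-map of $\mathbb{D}$ induces a bounded composition operator on $H^2$ by Littlewood's subordination principle, every symbol $\varphi\colon\mathbb{D}\to\mathbb{D}$ belongs to $\operatorname{B}(\beta)$. Theorem~\ref{T:rodriguezPiazza} then tells us that $\beta$ is essentially decreasing (and slowly oscillating, which we will not need).

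Next, I produce a weight $\beta'\simeq\beta$ for which $\lim_n\beta'_n$ exists, as required by Theorem~\ref{T:main1}. The natural choice is $\beta'_n:=\inf_{k\le n}\beta_k$. This sequence is positive, nonincreasing and bounded below by $0$, so it converges. Clearly $\beta'_n\le\beta_n$. Conversely, essential decrease with constant $C$ gives $\beta_n\le C\beta_k$ for all $k\le n$, hence $\beta_n\le C\beta'_n$. Thus $\beta\simeq\beta'$. (This is the remark from \cite[p.~1925]{LLQRP} already quoted before Theorem~\ref{T:main1}.)

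Finally, Theorem~\ref{T:main1} applies. Its condition (a), $\operatorname{K}(\beta)=\operatorname{K}(\mathbf{1})$, holds by assumption, so (c) yields $\beta\simeq\mathbf{1}$. Then $c\le\beta_n\le C$ for all $n$, so $\|f\|_\beta^2\asymp\sum|\widehat f(n)|^2=\|f\|_{H^2}^2$, which gives $H^2(\beta)=H^2$ with equivalent norms. There is no real obstacle here: the only step needing care is the construction of $\beta'$, and the monotone infimum handles it directly.
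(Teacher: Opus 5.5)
Your proof is correct and follows the same route as the paper, which simply states that the corollary follows from combining Theorem~\ref{T:rodriguezPiazza} (to obtain essential decrease) with Theorem~\ref{T:main1}, relying on the cited remark that essentially decreasing weights are equivalent to convergent ones. Your explicit choice $\beta'_n=\inf_{k\le n}\beta_k$ just spells out that remark.
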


One may wonder how strong is the convergence assumption in Theorem \ref{T:main1}. In other words, can we find examples of some weights $\beta$ with the property that
\begin{equation}
\label{eq:zero-liminf-limsup-infty}
0=\liminf_{n \to \infty}\beta_n<\limsup_{n \to \infty}\beta_n < +\infty
\end{equation}
for which $\operatorname{K}(\beta) = \operatorname{K}(\mathbf{1})$ or $\operatorname{S}_p(\beta) = \operatorname{S}_p(\mathbf{1})$ for some $1 \leq p < \infty$? To this end, we also explore weights $\beta$ with a lacunary behavior in the following sense. 

Consider a sequence of natural numbers $\Lambda=\{N_k\}_{k \in \N}$ with the property that there exists $q > 1$ such that $N_{k+1} \geq q N_k$ for all $k\in \N$. For such a set $\Lambda$, consider also the weights $\beta$ such that
\begin{equation}
\label{E:beta_lacunary}
0 < \beta_n \leq 1 \text{ for all } n\in\mathbb{N}, \qquad
\inf_{n \not\in \Lambda}\beta_n > 0.
\end{equation}
The assumption $\beta_n \leq 1$ is a mild normalization, used to ease the exposition of the proofs.
On the lacunary part, a priori we allow $\beta$ to tend to $0$. For these weights, we can completely characterize when $\operatorname{K}(\beta)=\operatorname{K}(\mathbf{1})$, and when $\operatorname{HS}(\beta)=\operatorname{HS}(\mathbf{1}).$ Notice that we are only considering the $p$-Schatten class with $p=2$. From now on we denote by $\|\varphi\|_{\mathrm{HS}(\beta)}$ the Hilbert-Schmidt norm of the composition operator $C_\varphi$ acting on $H^2(\beta)$.

\begin{theorem}
\label{thm:counterexample}
Let $q > 1$ and let $\Lambda$ be a lacunary sequence of natural numbers as above. Let $\beta$ be a weight as in \eqref{E:beta_lacunary}. The following results hold.  
\begin{enumerate}[\hspace{0.5cm}\rm(a)]
\item $\mathrm{K}(\beta) = \mathrm{K}(\mathbf{1})$ if and only if $\beta \simeq \mathbf{1}$.
\item Assume that $q>2$. Then, $\mathrm{HS}(\beta) = \mathrm{HS}(\mathbf{1})$ if and only if $\inf_{k \in \N}\beta_{N_k}N_k>0$. In such a case, there exists a constant $C>0$ such that for every $\varphi\colon\mathbb{D}\to\mathbb{D}$
\[
\frac1C \|\varphi\|_{\operatorname{HS}(\mathbf{1})} \leq \|\varphi\|_{\operatorname{HS}(\beta)}\leq C\|\varphi\|_{\operatorname{HS}(\mathbf{1})}.
\]
\end{enumerate}
\end{theorem}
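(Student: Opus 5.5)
Throughout, $\beta_n\le 1$ gives $\|f\|_\beta\le\|f\|_{H^2}$, i.e. $H^2\subset H^2(\beta)$ contractively. The HS norm is computed on the orthonormal basis $e_n=z^n/\sqrt{\beta_n}$:
\[
\|\varphi\|_{\operatorname{HS}(\beta)}^2=\sum_{n}\frac{\|\varphi^n\|_\beta^2}{\beta_n}=\sum_n\sum_m \frac{\beta_m}{\beta_n}\,|\widehat{\varphi^n}(m)|^2 .
\]
Write $\delta=\inf_{n\notin\Lambda}\beta_n>0$.

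For part (b), I first prove sufficiency. Assume $\beta_{N_k}\ge c/N_k$. I split the sum over $n$ into $n\notin\Lambda$ and $n\in\Lambda$.

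For $n\notin\Lambda$, the factor $1/\beta_n\le 1/\delta$ together with $\beta_m\le 1$ bounds these terms by $\delta^{-1}\sum_n\|\varphi^n\|_{H^2}^2=\delta^{-1}\|\varphi\|_{\operatorname{HS}(\mathbf 1)}^2$.

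For $n=N_k$, I bound $\|\varphi^{N_k}\|_\beta^2\le\|\varphi^{N_k}\|_{H^2}^2$ and $1/\beta_{N_k}\le N_k/c$. So I need
\[
\sum_k N_k\|\varphi^{N_k}\|_{H^2}^2\lesssim \sum_n \|\varphi^n\|_{H^2}^2 .
\]
The sequence $a_n=\|\varphi^n\|_{H^2}^2=\int|\varphi^*|^{2n}$ is decreasing in $n$. Hence $N_k a_{N_k}\le \frac{N_k}{N_k-N_{k-1}}\sum_{N_{k-1}<n\le N_k}a_n$. Lacunarity ($q>1$) makes the ratio bounded by $q/(q-1)$, and the blocks are disjoint. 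So the needed inequality, and the upper norm bound, follow.

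For the lower bound $\|\varphi\|_{\operatorname{HS}(\mathbf 1)}\lesssim\|\varphi\|_{\operatorname{HS}(\beta)}$, I would like to bound $\|\varphi^n\|_{H^2}^2$ by $\|\varphi^n\|_\beta^2/\beta_n$. The terms with $n\notin\Lambda$ and Fourier modes $m\notin\Lambda$ are harmless. The obstacle is the mass of $\varphi^n$ sitting at frequencies $m\in\Lambda$ with $\beta_m$ tiny. Here I would use $q>2$: a lacunary set with ratio $>2$ is a Sidon-type set with sparse sums, and I expect the Fourier mass of $\varphi^n$ on $\Lambda$ can be controlled by mass off $\Lambda$. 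More concretely, I would compare $\varphi^n$ with $\varphi^{n+1}$ or $\varphi^{n-1}$, using $n\pm1\notin\Lambda$ for most $n$, together with monotonicity of $a_n$, to reduce to indices and frequencies away from $\Lambda$. This is the step I expect to be hardest, and it is where $q>2$ must enter. Once it holds, the two HS norms are equivalent, and in particular $\operatorname{HS}(\beta)=\operatorname{HS}(\mathbf 1)$.

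For necessity in (b), suppose $\beta_{N_{k_j}}N_{k_j}\to0$ along a subsequence, which can be taken sparse. I then build $\varphi$ with $\varphi\in\operatorname{HS}(\mathbf 1)$ but $\sum_j \|\varphi^{N_{k_j}}\|_\beta^2/\beta_{N_{k_j}}=\infty$. The idea is to take a lens or cusp map touching the circle, for which $\|\varphi^n\|_{H^2}^2\approx 1/n$ up to a tunable slowly varying factor. The ingredients are:
\begin{itemize}
\item choose $\varphi$ with $\sum_n a_n<\infty$ but $\sum_j a_{N_{k_j}}/(\beta_{N_{k_j}}N_{k_j})\cdot N_{k_j}$ divergent, which the sparsity of the subsequence allows;
\item use $\|\varphi^N\|_\beta^2\ge\delta\sum_{m\notin\Lambda}|\widehat{\varphi^N}(m)|^2\gtrsim a_N$, valid since the Fourier mass of $\varphi^N$ spreads over long blocks rather than concentrating on $\Lambda$.
\end{itemize}

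For part (a), sufficiency is trivial, because $\beta\simeq\mathbf 1$ gives $H^2(\beta)=H^2$ with equivalent norms. For necessity, assume $\beta_{N_{k_j}}\to0$. I construct a compact $C_\varphi$ on $H^2$ that is noncompact on $H^2(\beta)$. Taking normalized test functions $z^{N_{k_j}}/\sqrt{\beta_{N_{k_j}}}\to0$ weakly, it suffices to find $\varphi$ compact on $H^2$ with
\[
\|\varphi^{N_{k_j}}\|_\beta^2\ \ge\ c\,\beta_{N_{k_j}}\quad\text{for all } j .
\]
By the lower bound above, $\|\varphi^N\|_\beta^2\gtrsim\delta\|\varphi^N\|_{H^2}^2$ for such $\varphi$. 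So I need $\|\varphi^{N}\|_{H^2}^2$ to decay slower than $\beta_{N}$ along the subsequence while $\varphi$ still satisfies Shapiro's compactness criterion. I would pick a symbol with an angular-derivative-free contact point, for example $\varphi$ mapping onto a domain with a very flat tangency, whose powers have $H^2$ norms decaying arbitrarily slowly. Such maps are compact on $H^2$ yet have arbitrarily slow decay, and the decay can be tuned to dominate $\beta_{N_{k_j}}$. Then $C_\varphi\notin\operatorname{K}(\beta)$, a contradiction.
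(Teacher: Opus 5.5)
Your upper estimate $\|\varphi\|_{\operatorname{HS}(\beta)}\le C\|\varphi\|_{\operatorname{HS}(\mathbf{1})}$ in (b) and the trivial directions match the paper. The real gap is the lower estimate $\|\varphi\|_{\operatorname{HS}(\mathbf{1})}\le C\|\varphi\|_{\operatorname{HS}(\beta)}$, which you only conjecture.

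Your suggested route, comparing $\varphi^n$ with $\varphi^{n\pm1}$ and using $n\pm1\notin\Lambda$, handles only the index $n$, and the index is never the problem. Since $\beta_n\le 1$, the factor $1/\beta_n\ge 1$ cannot hurt. The whole obstruction is the Fourier mass of $\varphi^n$ at frequencies $m\in\Lambda$, and shifting $n$ or using monotonicity of $\|\varphi^n\|_2$ says nothing about where that mass sits.

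Two ingredients are missing:
\begin{itemize}
\item Paley's inequality $\sum_{m\in\Lambda}|\widehat f(m)|^2\le C_\Lambda^2\|f\|_{H^1}^2$.
\item The additive fact (this is where $q>2$ enters) that $\Lambda+\Lambda$ misses $\Lambda$. Hence $P[(Pf)^2]=0$, where $P$ projects onto frequencies in $\Lambda$ and $Q=I-P$.
\end{itemize}
Then $P(\varphi^{2n})=P\big[(Q\varphi^n)^2+2\,P\varphi^n\,Q\varphi^n\big]$. Paley together with Cauchy--Schwarz in $H^1$ gives $\|P(\varphi^{2n})\|_2\le 3C_\Lambda\|Q(\varphi^n)\|_2$. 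Since $\sum_n\|Q\varphi^n\|_2^2\le\delta^{-1}\|\varphi\|_{\operatorname{HS}(\beta)}^2$, the even powers are controlled. The odd powers follow from $\|\varphi^{2n+1}\|_2\le\|\varphi^{2n}\|_2$.

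Both necessity arguments rest on the same unproved claim: $\|\varphi^N\|_\beta^2\ge c\|\varphi^N\|_2^2$ along the relevant $N$. You support it only by the heuristic that the mass ``spreads over long blocks''. In (a) you also cite ``the lower bound above'', which you never established and which would need $q>2$, whereas (a) is stated for all $q>1$. No such bound holds in general: for $\varphi(z)=z$ and $\Lambda=\{2^k\}$, $\varphi^{2^k}$ lives entirely on $\Lambda$.

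The paper obtains it by designing $\varphi$ so that $\|\varphi^N\|_{H^1}=o(\|\varphi^N\|_2)$. The boundary modulus is close to $1$ only on a set of small measure, so Cauchy--Schwarz shrinks the $H^1$ norm. Paley then makes $\|P\varphi^N\|_2\le C_\Lambda\|\varphi^N\|_{H^1}$ negligible, giving $\|Q\varphi^N\|_2^2\ge\tfrac14\|\varphi^N\|_2^2$. In (b) the paper's symbol is an outer function with $|\varphi|^2=1-1/M_k$ on disjoint sets $E_k$ of measure $\varepsilon/(2^kM_k)$, and $|\varphi|^2=1/4$ elsewhere.

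Your concrete candidate for (a) also fails. A domain with a flat tangency to $\partial\mathbb{D}$ at the contact point contains a small disk internally tangent there. By Julia--Carath\'eodory, $\varphi$ then has a finite angular derivative, so $C_\varphi$ is not compact on $H^2$. ``Angular-derivative-free'' and ``flat tangency'' are incompatible. The paper instead takes $\varphi=\exp\big(-F-\frac{1+z}{1-z}\big)$, where $F$ is the Herglotz integral of an even continuous $u\ge 0$ made as flat at $0$ as needed. Then $|\varphi|=e^{-u}$ on $\partial\mathbb{D}$ yields $\|\varphi^{M_k}\|_2^2\ge c\,2^{-k}$ along the chosen subsequence $M_k$. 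The singular inner factor forces compactness on $H^2$ via the criterion $\|k_a\circ\varphi\|_2\to 0$ as $|a|\to 1$.
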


This theorem shows that, as expected, the identities $\operatorname{K}(\beta)=\operatorname{K}(\mathbf{1})$ and $\operatorname{HS}(\beta)=\operatorname{HS}(\mathbf{1})$ are not equivalent in general. It also shows that a complete characterization of the equality $\operatorname{HS}(\beta) = \operatorname{HS}(\mathbf{1})$ has to take into account both the distribution of the masses (in this example, the lacunarity) and the decay of $\beta$. However, it does not give any further insights on compact composition operators. Namely, we still lack examples where $\operatorname{K}(\beta) = \operatorname{K}(\mathbf{1})$ for which \eqref{eq:zero-liminf-limsup-infty} holds.

The rest of the paper is organized as follows. We first gather some preliminary results in Section \ref{sec:preliminaries}. After that, we prove Theorem \ref{T:main1} in Section \ref{sec:proof1}. The main tool behind the proof will be the construction of a Blaschke product whose modulus goes to $1$ on the boundary of $\D$ at a controlled rate. This follows an idea of Jones \cite{Jones}. We continue with Section \ref{sec:proof2}, where we prove Theorem \ref{thm:counterexample}. Among other constructions, the main tool there will be a projection theorem due to Paley \cite{Paley}. In Section \ref{sec:further} we will examine further comparisons between $\operatorname{HS}(\beta)$ for different weights $\beta$. Finally, we end with some open questions in Section \ref{sec:questions}.

\textbf{Acknowledgements}. The authors would like to thank V\'ictor J. Maci\`a for our discussions during early stages of this work.

\section{Preliminaries}
\label{sec:preliminaries}
In this section we recall some of the main definitions and tools that will be used. 

\subsection{Hardy spaces}
For $1 \leq p <\infty,$ the Hardy space is
\[
H^p=\{f\in\operatorname{Hol}(\mathbb{D},\C)\colon \|f\|_p^p:=\sup_{0<r<1}\frac{1}{2\pi}\int_{-\pi}^{\pi}|f(re^{i\theta})|^p\operatorname{d}\!\theta<\infty \}.
\]
These are classical Banach spaces of analytic functions, of great interest for several branches of analysis, such as complex analysis, operator theory and harmonic analysis. For $p=\infty,$ we consider
\[
H^\infty = \{f\in\operatorname{Hol}(\mathbb{D},\C)\colon \|f\|_\infty =\sup_{z \in \mathbb D}|f(z)|<\infty\}.
\]
We refer to \cite{Duren-Hp} for more information about these spaces.

Hardy spaces possess many fundamental properties. For instance, it is known that $H^p \subseteq H^q$ whenever $p \geq q$. Therefore, other Hardy spaces $H^p$ inherit the properties of functions in $H^1$. One such property is a well-known consequence of a theorem of Fatou \cite[Theorem 2.2]{Duren-Hp}: every $f$ in $H^1$ admits boundary values, meaning that the radial limit $f(e^{i\theta}):=\lim_{r \to 1}f(re^{i\theta})$ exists for almost every $\theta\in (-\pi,\pi]$. 

Another important feature of the Hardy spaces is the so-called inner-outer factorization: any $f$ in $H^1$ can be written as the product
\[
f(z)= u(z) \exp\left(\dfrac{1}{2\pi}\int_{-\pi}^{\pi}\dfrac{e^{i\theta}+z}{e^{i\theta}-z}\,\log|f(e^{i\theta})|\operatorname{d}\!\theta\right), \qquad z \in \D,
\]
where $u$ is an inner function, that is, an element of $H^\infty$ that satisfies $|u|=1$ a.e. on $\partial\mathbb{D}$. Up to unimodular constants, inner functions consist of Blaschke products and singular inner functions. Namely, 
\[
u(z)=e^{i\gamma} \exp\left(\dfrac{1}{2\pi}\int_{-\pi}^{\pi}\dfrac{e^{i\theta}+z}{e^{i\theta}-z}\operatorname{d}\!\tau(\theta)\right)\prod_{n=1}^{N}B_{a_n}(z), \qquad z\in\mathbb{D},
\]
where $\gamma\in\mathbb{R}$, $\tau$ is a singular measure on the unit circle, $N \in \N \cup \{\infty\}$, $(a_n)_{1 \leq n \leq N}$ is a sequence in~$\mathbb{D}$ that satisfies $\sum_{n\in\mathbb{N}}(1-|a_n|)<\infty$, and 
\[
B_a(z) = \begin{cases}
\dfrac{\overline{a}}{|a|}\dfrac{a-z}{1-\overline{a}z}, \qquad a \in \D \setminus \{0\},  \\
z, \qquad a = 0,
\end{cases}
\qquad z \in \D.
\]
The product $B=\prod_{n = 0}^N B_{a_n}$ is called a Blaschke product. Notice that $(a_n)_n$ is precisely the sequence of zeroes of $B$, counted with multiplicity. For more information, see \cite[Section 2.4]{Duren-Hp}.

\subsection{Blaschke products}
In our proofs, we will use inner functions and the inner-outer factorization of functions in $H^2$ several times. To ease the exposition, we include here a preliminary result on Blaschke products, which should be known to experts. Recall that a Blaschke product $B$ is an inner function, meaning that
\[
\lim_{r \to 1^-}|B(r\xi)| = 1, \qquad \text{for a.e. } \xi \in \partial\D.
\]
However, such a convergence may be arbitrarily slow. This has been analyzed in \cite[Theorem 3.1]{LLQRP-Blaschke}, for instance. We will use a variant of a simpler statement due to M. M. Jones; see \cite[Theorem 10]{Jones}. 
Essentially, the only difference with the statement of Jones is the use of increasing/non-decreasing functions. We include the details for the sake of completeness.

\begin{lemma}
\label{lemma:Jones}
Let $r_0 \in (0,1)$ and let $w \colon [r_0,1] \to (0,1]$ be a continuous non-decreasing function with $w(1) = 1$. There exists a Blaschke product $B$ such that
$$M(r,B) := \sup_{\xi \in \partial\D} |B(r\xi)| < w(r), \qquad r \in (r_0,1).$$
\begin{proof}
Choose a decreasing sequence $(\varepsilon_n)_{n \in \N}$ with $\varepsilon_n \in (0,\tfrac14)$ and $\sum_{n \in \N}\varepsilon_n < \infty$. Using the properties of $w$, we can find an increasing sequence $(r_n)_{n \in \N}$ with $r_n \in (r_0,1)$ converging to $1$ and such that $w(r) \geq 1-\varepsilon_n$ for all $r_n < r < 1$.  For each $n \in \N$, we also choose $N_n \in \N$ so that $r_{n+1}^{N_n} < \varepsilon_n$.

For brevity, put $a_n = 1-2\varepsilon_n \in (0,1)$, $n \in \N$. Let us consider the Blaschke products $B_n$, $n \in \N$, given by
$$B_n(z) = e^{i\theta_n}\dfrac{z^{N_n}+a_n}{1+a_nz^{N_n}}, \qquad z \in \D,$$ where $\theta_n$ is a suitable angle.
We may consider the infinite Blaschke product $B = \prod_{n \in \N}B_n$, since the Blaschke condition is satisfied: using the elementary inequality
\[
1-e^{-x}\leq x, \qquad x\geq 0,
\]
we have that
\[
\sum_{n \in \N}N_n(1-a_n^{1/N_n}) \leq -\sum_{n \in \N}\log(a_n) = -\sum_{n \in \N}\log(1-2\varepsilon_n)< +\infty.
\]
The series converges because, by assumption, $\sum_{n\in\mathbb{N}}\varepsilon_n <\infty$.

Now, let $r \in (r_0,1)$. Find $n \in \N$ so that $r_n \leq r \leq r_{n+1}$.
A standard argument using the fact that $a_n\in(0,1)$ shows that
\[
M(r,B_n) = \dfrac{r^{N_n}+a_n}{1+a_nr^{N_n}}. 
\]
Then, by construction, we have that
\[
M(r,B) \leq M(r,B_n) \leq r^{N_n}+a_n \leq r_{n+1}^{N_n}+a_n \leq 1-\varepsilon_n \leq w(r).
\]
\end{proof}
\end{lemma}

\subsection{Weighted Hardy spaces}
Given a function $f \colon \mathbb{D} \to \mathbb{C}$ that is analytic in $0$, we denote its $n$-th Taylor coefficient by $\widehat{f}(n)$. The Hilbert space $H^2$ admits the following beautiful norm identity: 
\[
\|f\|_2^2=\sum_{n=0}^\infty |\widehat{f}(n)|^2, \qquad f\in H^2.
\]
In light of this property, it is natural to consider the following generalization: given a sequence of positive numbers $\beta = (\beta_n)_{n \in \N}$, the weighted Hardy space $H^2(\beta)$ is defined as
\[
H^2(\beta)=\{
f(z)=\sum_{n\in\mathbb{N}} \widehat{f}(n)
z^n\colon \norm{f}_{\beta}^2 := \sum_{n = 0}^{\infty}\beta_n|\widehat{f}(n)|^2 < + \infty
\}.
\]
It is important that $\beta_n>0$ for every $n\in\mathbb{N}$, in order for $\|\cdot\|_\beta$ to be a norm. In this case, $H^2(\beta)$ is a Hilbert space.

These spaces appeared in \cite{Shields}, where Shields studied the weighted shift operators. We should remark that every Hilbert space of functions which are analytic around~$0$ for which the monomials are orthogonal and polynomials are dense can be seen as a weighted Hardy space $H^2(\beta)$ for some appropriate weight $\beta$. This is the case of a number of spaces which often appear in the literature (see \cite[p. 1921]{LLQRP}), including all those mentioned in the Introduction.

In general, the functions in $H^2(\beta)$ are not necessarily holomorphic in $\D$. This will be a key property for us, since it is crucial to define composition operators with symbols in $\D$. However, it is known that \(H^2(\beta)\) is contained in $\operatorname{Hol}(\mathbb{D},\mathbb{C})$ if and only if
$$
\liminf_{n \to \infty} \beta_n^{1/n} \geq 1,
$$
see for example \cite[p. 1921]{LLQRP}.

It is clear from the definition that $H^2(\beta)$ contains all analytic polynomials as a dense subset. The canonical orthonormal basis of $H^2(\beta)$ is given by $\{e_n^\beta\}_{n\in\mathbb{N}},$ where
\begin{equation}
\label{eq:enONBbeta}
e_n^\beta=\frac{z^n}{\sqrt{\beta_n}}, \qquad z\in\mathbb{D},\quad n\in\mathbb{N}.
\end{equation}

\subsection{Compact operators}
In this paper, we are interested in compactness and Schatten classes of composition operators. We follow \cite[Chapter 1]{Zhu} for a brief overview on this topic. Consider a compact operator $A$ on a Hilbert space $H$, i.e., an operator that sends bounded sets to relatively compact sets. It is known \cite[pp. 16-17]{Zhu} that $A$ can be written as
\[
Ax=\sum_{n=1}^\infty \lambda_n \langle x,e_n\rangle_{\!H} \sigma_n, \qquad x\in H,
\]
where $\{e_n\}_{n\geq 1},\{\sigma_n\}_{n\geq 1}$ are orthonormal sets and $(\lambda_n)_n$ is a non-increasing sequence of non-negative numbers that tend to $0$. The number $\lambda_n$ is called the $n$-th singular value of $A$, and it admits several other characterizations. For example, 
\[
\lambda_{n+1}=\inf\{\|A-F\|\colon F\in\mathcal{F}_{n}\},
\]
where $\norm{\cdot}$ is the standard operator norm and $\mathcal{F}_{n}$ is the set of  bounded linear operators whose range has dimension less than or equal to $n$; see \cite[Theorem 1.34]{Zhu}. Notice in particular that $\lambda_1=\|A\|$.

For $0< p<\infty,$ we say that $A$ belongs to the $p$-Schatten class $\mathcal{S}^p(H)$ if and only if 
\[
\|A\|_{\mathcal{S}^p}^p:=\sum_{n=1}^\infty
|\lambda_{n}|^p<\infty.
\] 
The space $(\mathcal{S}^p(H), \|\cdot\|_{\mathcal{S}^p})$ is a Banach space for $p\geq 1$; see \cite[p. 18]{Zhu}. Clearly, we have the inclusion $\mathcal{S}^p(H)\subseteq \mathcal{S}^q(H)$ for $p\leq q$. Within this family, $\mathcal{S}^1$ and $\mathcal{S}^2$ are also called, respectively, the trace class and the Hilbert--Schmidt class. 
The Hilbert--Schmidt class has the nice following property (see \cite[Theorem 1.22]{Zhu}): given any orthonormal basis $\{x_n\}_{n+1}$, 

\[
\sum_{n=1}^{\infty}|\lambda_n|^2=\sum_{n=1}^{\infty} \|Ax_n\|_H^2=\sum_{n,m=1}^{\infty}|\langle Ax_n,x_m\rangle|^2.
\]

In particular, in our context, given a composition operator $C_\varphi$ on $H^2(\beta),$ choosing the orthonormal basis as in \eqref{eq:enONBbeta}, we have that
\begin{align*}
   | \langle C_\varphi(e_n^\beta),e_m^\beta\rangle_\beta|^2&=\frac{\beta_m}{\beta_n}|\widehat{\varphi^n}(m)|^2, \qquad n,m\in\mathbb{N},
\end{align*}
where $\varphi^n$ denotes the usual power with respect to multiplication: $\varphi^n = \varphi \cdot \varphi \cdot \ldots \cdot \varphi.$ Then, we obtain the following direct characterization:
\begin{equation}
\label{eq:Hs-norm-comp-op}
\varphi \in \operatorname{HS}(\beta) \iff \|\varphi\|_{\operatorname{HS}(\beta)}^2=\sum_{n = 0}^{\infty}\dfrac{\|\varphi^n\|_\beta^2}{\beta_n}=\sum_{n,m = 0}^{\infty}\dfrac{\beta_m}{\beta_n}|\widehat{\varphi^n}(m)|^2 < +\infty.
\end{equation}
For the classical Hardy space, since $\beta=\mathbf{1}$, this leads to the identity \cite[p. 26]{Shapiro-Book}
\begin{equation}\label{eq:HSHardyintegral}
    \|\varphi\|_{\operatorname{HS}(\mathbf{1})}^2=\sum_{n = 0}^{\infty}\int_{\mathbb{T}}|\varphi|^{2n}\operatorname{d}\!m=\int_{\mathbb{T}}\frac{1}{1-|\varphi|^2}\operatorname{d}\!m,
\end{equation}
where $m$ is the (normalized) Lebesgue measure (i.e., $m(\T) = 1$).

For $p$-Schatten classes with $p\neq 2,$ the situation is more complicated: $A$ is in $\mathcal{S}^p(H)$ if and only if 
\[
\sum_{n,m}|\langle Ax_n,x_m\rangle|^p<\infty
\]
for \emph{every} orthonormal set $\{x_n\}_n$; see \cite[Theorem 1.27]{Zhu}. In other words, it no longer suffices to check the summability on a single basis. However, we will use the following argument for the trace class operators. This result should be well-known to experts, although we include its short proof.

\begin{lemma}
\label{lemma:S1}
Let $A\colon H\to H$ be a compact operator on a Hilbert space. If there exists an orthonormal basis $\{x_n\}_n$ of $H$ such that
\[
\sum_{n \in \N}\|Ax_n\|_H < +\infty,
\]
then $A$ is a trace class operator.
\begin{proof}
 For every $n \in \N$, consider the rank-one operator
\[
R_n h = \langle h,x_n\rangle_H Ax_n,  \qquad h \in H.
\]
Since a rank-one operator has only one non-zero singular value, equal to its operator norm, we have
\[
\norm{R_n}_{\mathcal{S}_1}
= \norm{R_n}
= \|Ax_n\|_H,
\qquad n \in \N.
\]
Then, the series $\sum_{n = 0}^{\infty}R_n$ converges both in the operator norm and in $\mathcal{S}^1(H)$.
We conclude with the simple observation that $A = \sum_{n = 0}^{\infty}R_n$. Indeed, if $h=\sum_{n=0}^\infty\langle h,x_n\rangle_{\!H}x_n$, then
\[
Ah=\sum_{n=0}^\infty\langle h,x_n\rangle_{\!H}Ax_n=\sum_{n=0}^\infty R_n h.
\]
In particular, using the triangle inequality,
\[
\norm{A}_{\mathcal{S}^1} \leq \sum_{n \in \N} \norm{R_n}_{\mathcal{S}^1} = \sum_{n \in \N}\|Ax_n\|_H < +\infty.
\]
\end{proof}
\end{lemma}

\section{Proof of Theorem \ref{T:main1}}
\label{sec:proof1}

In this section we prove Theorem \ref{T:main1} using the combination of two results. In the first of them we show that our main questions (\textbf{Q1}) and (\textbf{Q2}) are easy to solve if the weight $\beta$ is unbounded. Similar remarks already appeared for bounded composition operators in \cite[Proposition 3.1]{LLQRP}, \cite[Proposition 3.1]{Zorboska}, and \cite[Proposition~2]{Zorboska-PAMS}.

\begin{proposition}
\label{P:betabounded}
Let $\beta$ be a weight. If $\beta$ is not bounded, then there exists a holomorphic map $\varphi \colon \D \to \D$ such that $\norm{\varphi}_{\infty} < 1$ and $C_\varphi$ is unbounded on $H^2(\beta)$. In particular, $\varphi \in \operatorname{S}_1(\mathbf{1})$ but $\varphi \not\in \operatorname{K}(\beta)$.
\begin{proof} 
Since $\beta$ is unbounded, we may find an increasing subsequence $(\beta_{n_k})_{k \in \N}$ such that $\beta_{n_k} \geq (k+1)^3$, $k \in \N$. Consider the sequence of coefficients $(a_n)_{n \in \N}$ given by
\[
a_n = 
\begin{cases}
\dfrac{c}{(k+1)^2}, & \text{if } n = n_k, \\
0, & \mathrm{otherwise},
\end{cases}
\]
where $c > 0$ is such that
\[
\sum_{n = 0}^{\infty}a_n = \sum_{k=0}^{\infty}\dfrac{c}{(k+1)^2} < \frac12.
\]
Then, the map given by
$\varphi(z) = \sum_{n = 0}^{\infty}a_nz^n$ satisfies $\|\varphi\|_{\infty}\leq\frac12<1$. Then,
$$\sum_{n \in \N}\norm{\varphi^n}_2 = \sum_{n \in \N} \norm{C_{\varphi}e_n}_2  \leq \sum_{n \in \N}\dfrac{1}{2^n} < +\infty.$$
By Lemma \ref{lemma:S1}, $C_{\varphi}$ is of trace class in $H^2$ (i.e., $\varphi \in \operatorname{S}_1(\mathbf{1})$) (see also \cite[Proposition~1]{Zorboska-PAMS}). However, $\varphi = C_\varphi z \notin H^2(\beta)$ since
\[
\norm{\varphi}_{\beta}^2 = \sum_{n = 0}^{\infty}\beta_na_n^2 = \sum_{k = 0}^{\infty}c^2\dfrac{\beta_{n_k}}{(k+1)^{4}} \geq \sum_{k = 0}^{\infty}c^2\dfrac{(k+1)^3}{(k+1)^4}= +\infty.
\]
In particular, this means that $C_{\varphi}$ is not bounded on $\mathrm{H^2}(\beta)$ (and so $\varphi \not\in \operatorname{K}(\beta)$).
\end{proof}
\end{proposition}

In our second result we analyze the opposite situation. Namely, we consider weights converging to zero. In this scenario, with the aid of Lemma \ref{lemma:Jones}, we explicitly construct a Blaschke product which gives rise to a Hilbert--Schmidt composition operator on $H^2(\beta)$. Inner functions can never be symbols of compact composition operators on $H^2$. Indeed, if $u$ is inner, then $\|C_u(z^n)\|_2=\|u^n\|_2=1$ for every $n\in\mathbb{N}$, but $z^n$ converges weakly to $0$ in $H^2$ (see \cite[Theorem 1.14]{Zhu}).

\begin{proposition}
\label{prop::limzero}
Let $\beta$ be a weight such that $\lim_{n \to \infty}\beta_n = 0$. Then, there exists a Blaschke product $B \in \operatorname{S}_1(\beta)$. In particular, $B \not\in \operatorname{K}(\mathbf{1})$.
\begin{proof}
By hypothesis, $\beta$ is bounded. Consider $M := \sup_{n \in \N} \beta_n.$
Since we are also assuming \eqref{eq:cond-D}, we deduce that 
\[
\lim_{n \to \infty} \beta_n^{1/n} = 1.
\]
Now, consider $\varepsilon_n := \beta_n/(n+1)^4$, $n \in \N$. Notice that we also have
\begin{equation}
\label{eq::eps-1}
\lim_{n \to \infty} \varepsilon_n^{1/n} = 1.
\end{equation}
Since $\limsup_{n \to \infty} \beta_n = 0$, we can find an increasing sequence $(L_n)_{n \in \N}$ in $\mathbb{N}$ such that
\begin{equation}
\label{eq::Ln}
\sup_{m \geq L_n} \beta_m \leq \dfrac{\varepsilon_n}{2}, \qquad n\in\mathbb{N}.
\end{equation}
Set
\begin{equation}
\label{eq::rn}
r_n:=  2^{-\frac{1}{2L_n}}, \qquad n\in\mathbb{N}.
\end{equation}
This sequence satisfies $r_n<1$ for every $n\in\mathbb{N}$ and $\lim_{n \to \infty} r_n=1$.
For each $n \in \N$, let us also consider
\begin{equation}
\label{eq::qn}
q_n := \min\left\lbrace 1-\dfrac{1}{n+1}, \left(\dfrac{\varepsilon_n}{4M}\right)^{\tfrac{1}{2n}}\right\rbrace.
\end{equation}
The sequence $(q_n)_{n \in \N}$ may not be non-decreasing. For this reason, for each $n \in \N$ we consider also
\[
s_n = \inf_{m \geq n}q_m.
\]
By \eqref{eq::eps-1}, we have $\lim_{n \to \infty}s_n = 1$. Using this, we may consider a continuous non-decreasing function $w \colon [r_0,1] \to (0,1]$ with $w(1) = 1$ and $w(r_n) = s_n$. We now use Lemma \ref{lemma:Jones} to find a Blaschke product $B$ with
\begin{equation}
\label{eq::MrnB}
M(r_n,B) < w(r_n) = s_n \leq q_n, \qquad n \in \N.
\end{equation}

To conclude, we need to justify that $B \in \operatorname{S}_1(\beta)$. For every $n \in \N$, $B^n$ is an inner function. In particular, we have
\begin{equation}
\label{eq::B-sum-inner}
\sum_{m \in \N}|\widehat{B^n}(m)|^2 = \|B^n\|_2^2= 1.
\end{equation}
We want to estimate $\norm{B^n}_{\beta}$. Using \eqref{eq::rn},
\[
\sum_{m < L_n}|\widehat{B^n}(m)|^2  \leq r_n^{-2L_n}\sum_{m < L_n}|\widehat{B^n}(m)|^2r_n^{2m} \leq 2\sum_{m \in \N}|\widehat{B^n}(m)|^2r_n^{2m}.
\]
Now, notice that
\[
\sum_{m \in \N}|\widehat{B^n}(m)|^2r_n^{2m}=\|B^n(r_n\cdot)\|^2_2=\frac{1}{2\pi}\int_0^{2\pi}|B^n(r_ne^{i\theta})|^2\operatorname{d}\!\theta\leq M(r_n,B)^{2n}.
\]
By construction, using \eqref{eq::qn} and \eqref{eq::MrnB}, 
\begin{align}
\label{eq::sum-Ln}
\sum_{m < L_n}|\widehat{B^n}(m)|^2 \leq 2M(r_n,B)^{2n}\leq 2q_n^{2n}\leq \dfrac{\varepsilon_n}{2M}.
\end{align}
Now, combining \eqref{eq::Ln}, \eqref{eq::B-sum-inner}, and \eqref{eq::sum-Ln} we see that for every $n$ in $\N$
\[
\norm{B^n}_\beta^2
= 
\sum_{m < L_n}\beta_m|\widehat{B^n}(m)|^2 + \sum_{m \geq L_n}\beta_m|\widehat{B^n}(m)|^2
\leq 
\frac{M\varepsilon_n}{2M} + \frac{\varepsilon_n}{2}  \sum_{m=0}^\infty|\widehat{B^n}(m)|^2 = \varepsilon_n.
\]
Therefore,
\[
\sum_{n \in \N}\dfrac{\norm{B^n}_\beta}{\sqrt{\beta_n}} \leq \sum_{n \in \N} \dfrac{\sqrt{\varepsilon_n}}{\sqrt{\beta_n}} = \sum_{n \in \N}\dfrac{1}{(n+1)^2} < +\infty.
\]
By Lemma \ref{lemma:S1}, $B \in \operatorname{S}_1(\beta)$.

As remarked above, $B \not\in \operatorname{K}(\mathbf{1})$ since it is an inner function. 
\end{proof}
\end{proposition}

We now see that Theorem \ref{T:main1} simply follows from the combination of Propositions \ref{P:betabounded} and \ref{prop::limzero}. Also, Corollary \ref{cor:h2} follows from the combination of Theorems \ref{T:rodriguezPiazza} and \ref{T:main1}.

\section{Lacunary weights}
\label{sec:proof2}
As we motivated in the Introduction, we consider lacunary weights to better understand what can happen for general bounded weights $\beta$ for which $0=\liminf_{n \to \infty}\beta_n<\limsup_{n\to\infty}\beta_n<+\infty$. 

We consider a sequence of natural numbers $\Lambda=\{N_k\}_{k \in \N}$ with the property that there exists $q>1$ such that $N_{k+1}\geq q N_k$ for all $k \in \N$. The following powerful result due to Paley \cite{Paley} is crucial for our purposes.

\begin{theorem}
\label{thm:Paley}
There exists $C_{\Lambda} > 0$ such that, for all $f \in H^1$, we have
\[
\sum_{n \in \Lambda}|\widehat{f}(n)|^2 \leq C^2_{\Lambda}\norm{f}_{H^1}^2.
\]
\end{theorem}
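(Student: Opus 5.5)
We will prove Paley's inequality by duality. The guiding fact is that $H^1$ pairs with $BMOA$, and that lacunary series with square-summable coefficients lie in $BMOA$ (indeed they are bounded in a suitable sense). Concretely, fix a finitely supported sequence $(c_k)$ with $\sum_k |c_k|^2 = 1$ and set
\[
g(z) = \sum_{k} c_k z^{N_k}.
\]
For a polynomial (or, by density, any) $f \in H^1$, the Cauchy pairing gives
\[
\sum_k \widehat{f}(N_k)\overline{c_k} = \lim_{r\to1}\int_{\mathbb{T}} f(r\xi)\overline{g(\xi)}\,dm(\xi).
\]
Taking the supremum over such $(c_k)$ turns the left-hand side of Theorem \ref{thm:Paley} into $\big(\sum_k |\widehat f(N_k)|^2\big)^{1/2}$. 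It therefore suffices to bound $|\int f\bar g\,dm|$ by $C_\Lambda\|f\|_{H^1}$ uniformly in such $g$.

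By Fefferman's duality $(H^1)^* = BMOA$ with respect to this pairing, the required bound is $\|g\|_{BMO} \le C_\Lambda (\sum |c_k|^2)^{1/2}$. We will verify this through the Carleson measure characterization:
\[
\|g\|_{BMOA}^2 \simeq |g(0)|^2 + \sup_I \frac{1}{|I|}\int_{S(I)} |g'(z)|^2(1-|z|^2)\,dA(z),
\]
where $S(I)$ is the Carleson box over the arc $I$. On $S(I)$ with $|I| = h$, we split the frequencies at $N_k \approx 1/h$.

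For the low frequencies $N_k \le 1/h$, we use Cauchy--Schwarz and lacunarity, which gives $\sum_{N_k\le 1/h} N_k \le C_q/h$. Hence $|g_{\rm low}'|^2 \le \sum_k N_k^2|c_k|^2 \cdot \#\{\cdots\}$. Better still, we bound $|g'_{\rm low}(z)| \le \sum N_k|c_k| \le (\sum|c_k|^2)^{1/2}(\sum_{N_k\le 1/h}N_k^2)^{1/2} \lesssim h^{-1}$. Since the area of $S(I)$ weighted by $(1-|z|)$ is $\lesssim h^3$, the contribution is $\lesssim h$, as needed.

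For the high frequencies, we drop the restriction to the box in the angular variable only partially. We integrate over the whole annulus $1-h<|z|<1$ and use orthogonality:
\[
\int_{1-h<|z|<1}|g_{\rm high}'|^2(1-|z|)\,dA \lesssim \sum_{N_k>1/h} |c_k|^2 N_k^2\int_{1-h}^1 r^{2N_k}(1-r)\,dr \lesssim \sum_{N_k > 1/h}|c_k|^2.
\]
This yields only the bound $O(1)$ rather than $O(h)$, so it is too weak.

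The high-frequency part is therefore the main obstacle. The estimate must exploit the fact that a high-frequency lacunary sum is spread evenly in angle. Our first attempt is to replace the $BMO$ route by the simpler known fact that lacunary series satisfy $\|g\|_{L^4} \le C_q \|g\|_{L^2}$, since the number of representations $N_j+N_k = m$ is bounded in terms of $q$. From this we get the $L^2$--$L^1$ comparison $\|g\|_2 \le C\|g\|_1$ for lacunary $g$ via Hölder ($\|g\|_2 \le \|g\|_1^{1/3}\|g\|_4^{2/3}$).

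With that in hand, we pass to $f$ by the factorization $f = f_1 f_2$ with $f_1,f_2 \in H^2$ and $\|f_j\|_2^2 = \|f\|_1$. We then write
\[
\sum_k \widehat f(N_k)\overline{c_k} = \langle f_1, \overline{f_2}\, g\rangle,
\]
which reduces the problem to bounding the norm of the Hankel-type operator $h \mapsto P_+(\bar h g)$ on $H^2$ by $C\|c\|_{\ell^2}$ (Nehari). The key step is to show this Hankel operator with lacunary symbol is bounded. We plan to do this by decomposing $\Lambda$ into finitely many subsequences with ratio $\ge 3$ (which costs a constant depending on $q$), and then estimating the matrix $(\overline{c_k}$ entries at positions $i+j = N_k)$ directly, using Schur's test with weights adapted to dyadic blocks. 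If Schur's test fails, we would fall back to the Carleson box computation above, refining the high-frequency estimate with the $L^4$ bound on arcs.
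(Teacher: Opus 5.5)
Your reductions are correct, but the proposal never proves the one estimate everything depends on. By duality, or via Nehari after factoring $f=f_1f_2$, the theorem is equivalent to $\bigl\|\sum_k c_kz^{N_k}\bigr\|_{BMOA}\le C\|c\|_{\ell^2}$. Equivalently, you need $\bigl|\sum_k\overline{c_k}\sum_{i+j=N_k}\widehat{f_1}(i)\widehat{f_2}(j)\bigr|\le C\|c\|_{\ell^2}\|f_1\|_2\|f_2\|_2$. None of your routes establishes this:

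The Carleson-box computation fails at high frequencies, as you note. Repairing it needs a localized $L^2$ bound on arcs of length $h$ for lacunary sums with frequencies at least $1/h$, which you do not supply.

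Invoking Nehari only restates the same BMOA bound.

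The $L^4$ detour cannot supply the missing input. The inequality $\|g\|_2\le C\|g\|_1$ concerns $g$ alone and is never used after the factorization. The theorem genuinely needs the analyticity of $f$: it fails with $L^1$ in place of $H^1$, since the Fej\'er kernels satisfy $\|K_N\|_{L^1}=1$ but $\sum_{N_k\le N/2}|\widehat{K_N}(N_k)|^2\ge\frac14\#\{k:N_k\le N/2\}\to\infty$. Nor can an $L^\infty$ bound on $g$ replace BMO: by Sidon's theorem, a lacunary series with coefficients in $\ell^2\setminus\ell^1$ is unbounded.

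Schur's test with weights $(p_i)$ depending only on the indices (for instance, adapted to dyadic blocks) cannot give an $\ell^2$ bound. Suppose it gave a Schur constant $K(c)\le K_0\|c\|_{\ell^2}$. Taking $c=e_k$, the only entry of row $N_k$ sits in column $0$, which forces $p_0\le K_0p_{N_k}$. Row $0$ has entries of modulus $|c_k|$ in columns $N_k$, which then forces $K(c)\ge K_0^{-1}\sum_k|c_k|$, an $\ell^1$ rather than $\ell^2$ bound.

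The missing idea is elementary, and you already had both ingredients. First reduce to $N_{k+1}\ge 3N_k$ by splitting $\Lambda$ into $r$ subsequences with $q^r\ge 3$. Factor $f=f_1f_2$ with $\|f_1\|_2^2=\|f_2\|_2^2=\|f\|_{H^1}$. Then split the Cauchy product $\widehat f(N_k)=\sum_{i+j=N_k}\widehat{f_1}(i)\widehat{f_2}(j)$ according to whether $i\le N_k/2$ or $i>N_k/2$, and apply Cauchy--Schwarz to each half:
\[
|\widehat f(N_k)|^2\le 2\|f_1\|_2^2\sum_{N_k/2\le j\le N_k}|\widehat{f_2}(j)|^2+2\|f_2\|_2^2\sum_{N_k/2<i\le N_k}|\widehat{f_1}(i)|^2 .
\]
The intervals $[N_k/2,N_k]$ are pairwise disjoint, so summing over $k$ gives $\sum_k|\widehat f(N_k)|^2\le 4\|f\|_{H^1}^2$. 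For general $q>1$ this yields $C_\Lambda^2=4r$.

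The paper does not prove the statement itself; it quotes Paley and Duren, Section 6.4, and remarks that the constant only appears inside Paley's proof. The argument above is the standard textbook proof of Paley's theorem, and it makes $C_\Lambda$ explicit. It also gives, a posteriori, the Hankel and BMOA bounds your route needed, without duality, Carleson measures, or Nehari's theorem.
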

This result also appears in \cite[Section 6.4]{Duren-Hp}. However, the statement above is not exactly the one given in \cite{Duren-Hp,Paley}. In these references, it is simply stated that if $\norm{f}_{H^1} < +\infty$, then $\sum_{n \in \Lambda}|\widehat{f}(n)|^2 < +\infty$, but the existence of $C_{\Lambda} > 0$ is shown in the proof (not the statement) of \cite{Paley}. In this form, the result appears in \cite[Eq. (1.9)]{Sola} for $N_k = 2^k$, for example.

Motivated by this result of Paley, we introduce the (bounded) projection operators $P \colon H^1 \to H^2$ and $Q \colon H^2\to H^2$ given by
\begin{align}
    \label{eq:defP} Pf(z) &= \sum_{n \in \Lambda}\widehat{f}(n)z^{n}, \qquad z \in \D, \quad f\in H^1, \\
     \label{eq:defQ} Qf &= f - Pf, \qquad f \in H^2.
\end{align}
On the Hilbert space $H^2$ the operators $P,Q$ are orthogonal projections. We will repeatedly use that
$$\varphi^{n} = P(\varphi^n)+Q(\varphi^n), \qquad n \in \N,$$
for the symbols $\varphi \colon \D \to \D$. We remark that in general $Q$ is not well-defined on $H^1$.

\subsection{Compactness}
We begin with a general lemma that allows one to construct compact composition operators on $H^2$ based on the inner-outer factorization.

\begin{lemma}\label{L:compactsingularinner}
    Let $u$ be a non-negative, continuous function on $[-\pi,\pi]$, with the property that $m_\varepsilon=\min_{\varepsilon\leq |t|\leq \pi}u(t)>0$ for every $\varepsilon>0$. Then, the symbol
    $\varphi \colon \D \to \D$ given by
\begin{equation}
\label{eq:def-varphi}
\varphi(z) = \exp\left(-F(z)-\dfrac{1+z}{1-z}\right), \qquad z \in \D,
\end{equation}
where
\begin{equation}
\label{eq:def-F}
F(z) = \dfrac{1}{2\pi}\int_{-\pi}^{\pi}\dfrac{e^{it}+z}{e^{it}-z}\,u(t)\operatorname{d}\!t, \qquad z \in \D,
\end{equation}
defines a compact operator $C_\varphi\colon H^2\to H^2$.
\end{lemma}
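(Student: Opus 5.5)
We need to show that $C_\varphi$ is compact on $H^2$. The plan is to use the classical criterion that a composition operator on $H^2$ is compact as soon as $\varphi$ has no finite angular derivative at any point of $\partial\D$. We combine this with the fact that $\varphi$ extends continuously to $\overline{\D}\setminus\{1\}$ with $|\varphi|<1$ there. Concretely, we will check directly that $C_\varphi$ is a norm limit of compact operators. A more self-contained route uses Shapiro's Nevanlinna counting criterion. A simpler sufficient condition we aim for is the \emph{Hilbert--Schmidt-type} estimate via \eqref{eq:HSHardyintegral} away from $1$, together with the failure of an angular derivative at $1$.

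First we compute boundary values. Since $\operatorname{Re}\frac{e^{it}+z}{e^{it}-z}$ is the Poisson kernel, $\operatorname{Re}F(z)=P[u](z)\ge 0$. Also $\operatorname{Re}\frac{1+z}{1-z}=\frac{1-|z|^2}{|1-z|^2}>0$. Hence $|\varphi(z)|=\exp(-P[u](z)-\frac{1-|z|^2}{|1-z|^2})<1$, so $\varphi$ is a self-map of $\D$. Because $u$ is continuous, $P[u]$ extends continuously to $\overline{\D}$ with boundary values $u(t)$. Thus for $e^{it}\neq 1$ the boundary modulus is $|\varphi(e^{it})|=e^{-u(t)}$. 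On the arc $\varepsilon\le|t|\le\pi$ this gives $|\varphi|\le e^{-m_\varepsilon}<1$, and by continuity of the Poisson integral the same bound (slightly weakened) holds on a neighbourhood of that arc in $\overline{\D}$.

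Next we show that there is no finite angular derivative at $1$, and none at other boundary points. At points $e^{it}\neq1$ there is none because $|\varphi|<1$ near them. At $1$, we use the Julia--Carathéodory quotient: for $0<r<1$,
\[
\frac{1-|\varphi(r)|}{1-r}\ \ge\ \frac{1-\exp\!\left(-\frac{1+r}{1-r}\right)}{1-r}\ \longrightarrow\ +\infty .
\]
Here we used $P[u]\ge0$, and $|\varphi(r)|\to0$ in fact. So the angular derivative at $1$ is infinite. By the Julia--Carathéodory theorem we get $\liminf_{z\to\zeta}\frac{1-|\varphi(z)|}{1-|z|}=\infty$ for every $\zeta\in\partial\D$.

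To conclude compactness, we invoke Shapiro's characterization. The cleanest sufficient route is the following: if $\varphi$ is univalent or finitely valent, no angular derivative implies compactness, but $\varphi$ need not be univalent here. So we instead prove $\lim_{|w|\to1}N_\varphi(w)/\log(1/|w|)=0$. We split $\D$ into the region near $1$, where $|\varphi|$ is tiny because $\frac{1-|z|^2}{|1-z|^2}$ is large on horocycles, and the region away from $1$, where $|\varphi|\le\rho<1$. The points $w$ with $|w|>\rho$ have all preimages in a thin region tangent at $1$ (outside a large horodisc). There we estimate $N_\varphi(w)$ through Littlewood's inequality applied to $\varphi$ restricted to that region. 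This last estimate is the main obstacle. An alternative we would try first is to write $\varphi=\psi\circ\chi$ with $\chi(z)=\exp(-\frac{1+z}{1-z})$ and $C_\varphi=C_\chi$-type factors. Namely, $\varphi=e^{-F}\cdot\chi$, and we approximate $C_\varphi$ in norm by $C_{\varphi_r}$, where the $\varphi_r$ are cut-offs of $u$. Failing that, we would directly bound $\|C_\varphi f\|$ for $f$ with small low-order coefficients, using that $\varphi(\overline{\D}\setminus\{1\})$ stays inside a compact subset away from a tangential approach region.
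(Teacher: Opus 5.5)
\textbf{There is a genuine gap: the proposal never proves compactness.} Your preliminary facts are correct: $|\varphi|<1$ in $\mathbb{D}$, $|\varphi(e^{it})|=e^{-u(t)}$ with $|\varphi|$ bounded away from $1$ near every point of $\partial\mathbb{D}\setminus\{1\}$, and there is no finite angular derivative anywhere. But the criterion you open with is false for general symbols. Absence of a finite angular derivative is necessary, not sufficient, for compactness on $H^2$. For example, take a Blaschke product whose zeros satisfy $\sum_n(1-|a_n|)/|\zeta-a_n|^2=\infty$ for every $\zeta\in\partial\mathbb{D}$. It has no finite angular derivative, yet $C_B$ is not compact because $\|B^n\|_2=1$.

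You notice this yourself. However, the Nevanlinna counting estimate near $1$, which is the entire content of the lemma, is left as ``the main obstacle.'' Littlewood's inequality alone only gives $N_\varphi(w)=O(\log(1/|w|))$, i.e.\ boundedness, not the little-$o$. None of your fallbacks closes the gap:
\begin{itemize}
\item $\varphi=e^{-F}\chi$ is a product, not a composition $\psi\circ\chi$.
\item Proving $\|C_\varphi-C_{\varphi_r}\|\to0$ needs exactly the uniform control near $1$ that is missing.
\item The claim that $\varphi(\overline{\mathbb{D}}\setminus\{1\})$ lies in a compact subset of $\mathbb{D}$ is false, since $|\varphi(e^{it})|=e^{-u(t)}\to1$ as $t\to0$.
\item The Hilbert--Schmidt route fails already for $u(t)=|t|$, where $\int(1-|\varphi|^2)^{-1}\,dm\geq\frac{1}{2\pi}\int_{-\pi}^{\pi}\frac{dt}{2|t|}=\infty$. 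In Theorem \ref{T:lacunaryKsec4}, $u$ is even flatter at $0$.
\end{itemize}

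\textbf{What is missing} is a mechanism that turns the radial smallness $|\varphi(r)|\leq e^{-1}$, produced by the singular factor, into control of the boundary behaviour near $1$. The paper uses the reproducing-kernel test $\lim_{|a|\to1}\|k_a\circ\varphi\|_2=0$ and splits the boundary integral at $|t|=\varepsilon$:
\begin{itemize}
\item On $|t|\geq\varepsilon$ the integrand is at most $(1-|a|^2)/(1-e^{-m_\varepsilon})^2$.
\item On $|t|<\varepsilon$ it is dominated by the bounded positive harmonic function $h_a=\operatorname{Re}\frac{1+\overline{a}\varphi}{1-\overline{a}\varphi}$. The Poisson formula at $r=1-\varepsilon$, where the kernel is at least $1/(2\varepsilon)$ on that arc, gives $\frac{1}{2\pi}\int_{|t|<\varepsilon}h_a\,dt\leq 2\varepsilon\, h_a(1-\varepsilon)\leq 2\varepsilon\frac{1+e^{-1}}{1-e^{-1}}$, uniformly in $a$.
\end{itemize}

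Alternatively, your facts do suffice if you feed them into the Cima--Matheson formula $\|C_\varphi\|_e^2=\sup_{|\alpha|=1}\|\sigma_\alpha\|$. Here $\sigma_\alpha$ is the singular part of the Clark measure $\mu_\alpha$, whose Poisson integral is $\frac{1-|\varphi|^2}{|\alpha-\varphi|^2}$. This Poisson integral is bounded near each point of $\partial\mathbb{D}\setminus\{1\}$, so $\mu_\alpha$ is absolutely continuous there. At the remaining point, $\mu_\alpha(\{1\})\leq\frac{1-r}{1+r}\cdot\frac{1-|\varphi(r)|^2}{|\alpha-\varphi(r)|^2}\leq\frac{1-r}{1+r}\cdot\frac{1}{(1-e^{-1})^2}\to0$. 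This is the same mechanism as the paper's, phrased through Clark measures instead of reproducing kernels.
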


\begin{proof}
By assumption $u$ is non-negative and continuous, and  $\varphi$ is the product of the outer function $\exp{(-F)}$ with a singular inner function. In particular, 
\[
\|\varphi\|_{\infty}\leq \|\exp{(-F)}\|_{\infty} = \|e^{-u}\|_{L^\infty}\leq 1,
\]
and the symbol $\varphi\colon\mathbb{D}\to\mathbb{D}$ satisfies  $|\varphi(e^{it})| = e^{-u(t)}$ for a.e.  $t \in [-\pi,\pi]$.

By \cite[Corollary 2.4]{Akeroyd} (see also \cite{CimaMatheson}), to prove that $C_\varphi\colon H^2\to H^2$ is compact it suffices to prove that
\[
\lim_{|a| \to 1^-}\norm{k_a \circ \varphi}_{2} = 0,
\]
where, for every fixed $a\in\mathbb{D}$,
\[
k_a(z) = \frac{\sqrt{1-|a|^2}}{1-\overline{a}z}, \qquad z \in \D.
\]

Fix $0<\varepsilon<1$. We split the $H^2$-norm as follows:
\[
\norm{k_a\circ\varphi}_2^2
= 
\frac{1}{2\pi}\int_{|t|<\varepsilon}
\frac{1-|a|^2}{\abs{1-\overline{a}\varphi(e^{it})}^2}\operatorname{d}\!t
+ 
\frac{1}{2\pi}\int_{\varepsilon\leq|t|\leq\pi}
\frac{1-|a|^2}{\abs{1-\overline{a}\varphi(e^{it})}^2}\operatorname{d}\!t.
\]

We estimate the second integral. By assumption, we have that
\[
|\varphi(e^{it})| = e^{-u(t)}\leq e^{-m_\varepsilon}\leq 1,
\qquad \varepsilon\leq|t|\leq\pi,
\]
for almost every $t$. Therefore,
\[
|1-\overline{a}\varphi(e^{it})|
\geq 
1-|a||\varphi(e^{it})|
\geq 
1-e^{-m_\varepsilon},
\]
from which it follows that
\[
\frac{1}{2\pi}\int_{\varepsilon\leq|t|\leq\pi}
\frac{1-|a|^2}{|1-\overline{a}\varphi(e^{it})|^2}\operatorname{d}\!t
\leq \frac{1-|a|^2}{(1-e^{-m_\varepsilon})^2}.
\]

In order to estimate the first integral, consider the positive harmonic function
\[
h_a(z)=\operatorname{Re}\left(
\frac{1+\overline{a}\varphi(z)}{1-\overline{a}\varphi(z)}
\right)
=\frac{1-|a|^2|\varphi(z)|^2}
{|1-\overline{a}\varphi(z)|^2},
\qquad z\in\D.
\]
Since $|\varphi(z)|\leq 1$, we have
\[
\abs{k_a(\varphi(z))}^2
=\frac{1-|a|^2}{\abs{1-\overline{a}\varphi(z)}^2}
\leq h_a(z),\qquad z\in\D.
\]
The same inequality holds for almost every boundary point. Moreover, for each fixed $a\in\D$, the function $h_a$ is bounded, since
\[
0<h_a(z)\leq
\frac{1+|a||\varphi(z)|}{1-|a||\varphi(z)|}
\leq\frac{1+|a|}{1-|a|}.
\]
Consequently, the Poisson formula gives
\begin{equation}
\label{eq:formula-int-ha}
h_a(r)=\frac{1}{2\pi}\int_{-\pi}^{\pi}
\frac{1-r^2}{\abs{e^{it}-r}^2}\,h_a(e^{it})\operatorname{d}\!t,
\qquad 0\leq r<1.
\end{equation}
Choose $r=1-\varepsilon$. For $|t|<\varepsilon$, using $2(1-\cos t)\leq t^2$, we obtain
\[
|e^{it}-r|^2
=(1-r)^2+2r(1-\cos t)
\leq\varepsilon^2+t^2
\leq 2\varepsilon^2.
\]
On the other hand,
\[
1-r^2=\varepsilon(2-\varepsilon)\geq\varepsilon.
\]
Thus,
\[
\frac{1-r^2}{\abs{e^{it}-r}^2}
\geq
\frac{1}{2\varepsilon},
\qquad |t|<\varepsilon.
\]
Using this information in \eqref{eq:formula-int-ha}, we have
\[
h_a(r) \geq 
\dfrac{1}{2\varepsilon}\dfrac{1}{2\pi}\int_{|t| < \varepsilon} h_a(e^{it})\operatorname{d}\!t.
\]
This further implies that
\[
\frac{1}{2\pi}\int_{|t|<\varepsilon}
\frac{1-|a|^2}{\abs{1-\overline{a}\varphi(e^{it})}^2}\operatorname{d}\!t
\leq
\frac{1}{2\pi}\int_{|t|<\varepsilon} h_a(e^{it})\operatorname{d}\!t
\leq 
2\varepsilon h_a(1-\varepsilon).
\]

It remains to bound $h_a(1-\varepsilon)$ independently of $a$ and $\varepsilon$. Since $u$ is non-negative, \eqref{eq:def-F} gives
\[
\operatorname{Re}F(r)
=
\frac{1}{2\pi}\int_{-\pi}^{\pi}
\frac{1-r^2}{\abs{e^{it}-r}^2}\,u(t)\operatorname{d}\!t
\geq 0.
\]
By \eqref{eq:def-varphi}, this implies that
\[
|\varphi(r)|
=
\exp\left(-\operatorname{Re}F(r)-\frac{1+r}{1-r}\right)
\leq
\exp\left(-\frac{1+r}{1-r}\right)
\leq 
e^{-1},\qquad 0\leq r<1.
\]
Therefore,
\[
h_a(1-\varepsilon)
\leq
\frac{1+|a||\varphi(1-\varepsilon)|}
{1-|a||\varphi(1-\varepsilon)|}
\leq
\frac{1+e^{-1}}{1-e^{-1}}.
\]

Combining the estimates above, we conclude that
\[
\norm{k_a\circ\varphi}_2^2
\leq 
2\varepsilon\frac{1+e^{-1}}{1-e^{-1}}
+\frac{1-|a|^2}{(1-e^{-m_\varepsilon})^2}.
\]
For each fixed $\varepsilon$, the second term tends to zero as $|a|\to 1^-$. Hence,
\[
\limsup_{|a|\to 1^-}
\norm{k_a\circ\varphi}_2^2
\leq 
2\varepsilon\frac{1+e^{-1}}{1-e^{-1}}.
\]
Letting $\varepsilon\to 0$ proves the desired limit, and the compactness of $C_\varphi\colon H^2\to H^2$.
\end{proof}

As a consequence of this result, we are able to characterize the equality $\operatorname{K}(\beta)=\operatorname{K}(\mathbf{1})$. In particular, the characterization is the same as in the case of $\beta$ essentially decreasing. 

\begin{theorem}\label{T:lacunaryKsec4}
Let $\beta$ be a weight as in \eqref{E:beta_lacunary}. 
Then, $\mathrm{K}(\beta) = \mathrm{K}(\mathbf{1})$ if and only if $\beta \simeq \mathbf{1}$ (i.e., $\inf_{k \in \N}\beta_{N_k} > 0$).
\end{theorem}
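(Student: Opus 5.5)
The easy direction is immediate: if $\beta\simeq\mathbf{1}$, then $H^2(\beta)=H^2$ with equivalent norms, so compactness of $C_\varphi$ is the same on both spaces. Moreover, \eqref{E:beta_lacunary} gives $\beta_n\le 1$ and $\inf_{n\notin\Lambda}\beta_n>0$, so $\beta\simeq\mathbf{1}$ is equivalent to $\inf_k\beta_{N_k}>0$. For the converse we argue by contraposition. Assume $\inf_k \beta_{N_k}=0$, and pass to a subsequence $N_{k_j}$ with $\beta_{N_{k_j}}\to 0$ as fast as we like; this subsequence is still lacunary. We then construct a symbol $\varphi\in\operatorname{K}(\mathbf{1})$ with $\varphi\notin\operatorname{K}(\beta)$, taking $\varphi$ of the form \eqref{eq:def-varphi} from Lemma \ref{L:compactsingularinner}, so that compactness on $H^2$ is automatic.

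The mechanism for non-compactness on $H^2(\beta)$ is the following. Since $z^n/\sqrt{\beta_n}$ is an orthonormal basis tending weakly to $0$, it suffices to find infinitely many $n$ with $\|\varphi^n\|_\beta^2/\beta_n\ge c>0$. We take $n=N_{k_j}$ with $\beta_{N_{k_j}}$ tiny. Write $\varphi^n=P(\varphi^n)+Q(\varphi^n)$. Since $\beta_m\ge\delta:=\inf_{m\notin\Lambda}\beta_m$ off $\Lambda$,
\[
\|\varphi^n\|_\beta^2\ \ge\ \delta\,\|Q(\varphi^n)\|_2^2\ \ge\ \delta\big(\|\varphi^n\|_2^2-\|P(\varphi^n)\|_2^2\big).
\]
By Theorem \ref{thm:Paley} applied to $f=\varphi^n$, we have $\|P(\varphi^n)\|_2\le C_\Lambda\|\varphi^n\|_{1}$. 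Hence it is enough to arrange
\[
\|\varphi^n\|_2^2-C_\Lambda^2\|\varphi^n\|_1^2\ \gtrsim\ \|\varphi^n\|_2^2
\qquad\text{and}\qquad
\|\varphi^n\|_2^2\ \ge\ \beta_n\quad\text{for } n=N_{k_j}.
\]
Here $\|\varphi^n\|_p^p=\int e^{-pnu}\,dm$, so everything reduces to the choice of the continuous function $u\ge0$ vanishing only at $0$.

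Next we choose $u$. Near $0$, set $u(t)=\omega(|t|)$ for an increasing $\omega$ with $\omega(0)=0$, and write $\lambda(s)=m\{u<s\}$. Then $\|\varphi^n\|_2^2\approx\int e^{-2ns}\,d\lambda(s)$ and $\|\varphi^n\|_1^2\approx\big(\int e^{-ns}\,d\lambda(s)\big)^2$. The ratio $\|\varphi^n\|_1^2/\|\varphi^n\|_2^2$ is always at most $1$, and it becomes small when the level distribution is very uneven at scale $1/n$. If $u$ is extremely flat near $0$ (for example $u(t)=e^{-1/|t|}$-type behavior), then $\|\varphi^n\|_2^2$ decays only logarithmically-ish, roughly like $\lambda(1/n)$. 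The tension is that we need $\|\varphi^n\|_2^2$ \emph{not too small}, namely at least $\beta_n$, which we ensure by thinning the subsequence so that $\beta_{N_{k_j}}\ll\lambda(1/N_{k_j})$. At the same time we need the $L^1$ norm squared to be a small fraction of the $L^2$ norm squared. To get this, make $u$ a staircase-like continuous function: on shells $|t|\in[a_{j+1},a_j]$, keep $u$ nearly constant at level $\approx 1/N_{k_j}$ on a tiny set and much larger on most of the shell. Then $\int e^{-2nu}$ is dominated by a set of small measure $\mu_j$ where $e^{-nu}\approx 1$, giving $\|\varphi^n\|_2^2\approx\mu_j$ while $\|\varphi^n\|_1^2\approx\mu_j^2\ll\mu_j$.

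The main obstacle, where most care is needed, is making this staircase consistent across all $j$ simultaneously while keeping $u$ continuous, positive away from $0$, and with $\min_{\varepsilon\le|t|}u>0$. The contributions from other shells must not spoil either inequality: shells with smaller $u$ (closer to $0$) contribute to both norms, so their measures must be summably smaller than $\mu_j^2/C_\Lambda^2$. We handle this by choosing $a_j,\mu_j$ inductively, rapidly decreasing, and only then selecting $k_j$ so that $\beta_{N_{k_j}}\le\mu_j$. Once this is done, $\|C_\varphi e^\beta_{N_{k_j}}\|_\beta^2\ge \delta\mu_j/(2\beta_{N_{k_j}})\ge\delta/2$, so $C_\varphi$ is not compact on $H^2(\beta)$ (or is even unbounded), while Lemma \ref{L:compactsingularinner} gives $\varphi\in\operatorname{K}(\mathbf{1})$.
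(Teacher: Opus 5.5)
Your argument is sound and has the same skeleton as the paper's proof:
\begin{itemize}
\item $\varphi$ of the form \eqref{eq:def-varphi}, so that $\varphi\in\mathrm{K}(\mathbf{1})$ by Lemma~\ref{L:compactsingularinner};
\item the bound $\|\varphi^n\|_\beta^2\ge c\bigl(\|\varphi^n\|_2^2-C_\Lambda^2\|\varphi^n\|_{H^1}^2\bigr)$ with $c=\inf_{n\notin\Lambda}\beta_n$, coming from Theorem~\ref{thm:Paley};
\item testing on $e^\beta_n$ for $n$ in a subsequence of $\Lambda$ along which $\beta_n$ is tiny compared with $\|\varphi^n\|_2^2$.
\end{itemize}

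The difference is the step you single out as the main obstacle, which in fact holds automatically. For \emph{any} continuous $u\ge0$ vanishing only at $0$, $|\varphi^n|=e^{-nu}$ concentrates at $t=0$, and $\|\varphi^n\|_{H^1}=o(\|\varphi^n\|_2)$ along all $n$. Write $m_\varepsilon=\min_{\varepsilon\le|t|\le\pi}u>0$. Splitting at $|t|=\varepsilon$ and using Cauchy--Schwarz gives $\|\varphi^n\|_{H^1}\le\sqrt{\varepsilon/\pi}\,\|\varphi^n\|_2+e^{-nm_\varepsilon}$. Choosing $\eta>0$ with $u\le m_\varepsilon/2$ on $|t|<\eta$ gives $\|\varphi^n\|_2^2\ge(\eta/\pi)e^{-nm_\varepsilon}$. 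Hence $\|\varphi^n\|_{H^1}/\|\varphi^n\|_2\le\sqrt{\varepsilon/\pi}+\sqrt{\pi/\eta}\,e^{-nm_\varepsilon/2}$, and the ratio tends to $0$.

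Your own flat-$u$ heuristic already shows this. There both $\|\varphi^n\|_{H^1}$ and $\|\varphi^n\|_2^2$ are roughly $\lambda(1/n)$, so $\|\varphi^n\|_{H^1}^2/\|\varphi^n\|_2^2$ is roughly $\lambda(1/n)\to0$, and there is no tension.

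Consequently the paper imposes only a flatness condition on $u$. It first picks $M_k\in\Lambda$ with $\beta_{M_k}\le 4^{-k}$. It then takes an even, increasing $u$ with $u\le 1/M_k$ on $|t|\le t_k=2\arcsin 2^{-k}$. This gives $\|\varphi^{M_k}\|_2^2\gtrsim 2^{-k}$, hence $\|C_\varphi e^\beta_{M_k}\|_\beta^2\gtrsim 2^k\to\infty$, so $C_\varphi$ is even unbounded on $H^2(\beta)$.

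Your staircase construction with tiny plateaus at level about $1/N_{k_j}$ can be completed, so it is not wrong. However, it buys nothing, and it needs bookkeeping that your sketch only partly carries out. Besides the inner shells, two further contributions to $\|\varphi^n\|_{H^1}$ with $n=N_{k_j}$ must be small compared with $\sqrt{\mu_j}/C_\Lambda$:
\begin{itemize}
\item the outer plateaus, about $\mu_{j'}e^{-N_{k_j}/N_{k_{j'}}}$ for $j'<j$;
\item the bulk parts of the shells $j'\le j$, about $e^{-N_{k_j}b_{j'}}$, where the bulk levels must satisfy $b_{j'}\to0$ to keep $u$ continuous at $0$.
\end{itemize}
This can be arranged by choosing $k_j$ large at each inductive step, which is possible because $\liminf_k\beta_{N_k}=0$ leaves infinitely many admissible indices. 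Once you see that the $H^1$ versus $H^2$ comparison is automatic, the plateaus and this bookkeeping can be dropped entirely.
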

\begin{proof}
If $\beta \simeq \mathbf{1}$, it is clear that $\mathrm{K}(\beta) = \mathrm{K}(\mathbf{1})$. Let us assume that $\inf_{k \in \N}\beta_{N_k} = 0$. Using Lemma \ref{L:compactsingularinner}, we will construct $\varphi \in \mathrm{K}(\mathbf{1})$ such that $C_{\varphi}$ is unbounded on $H^2(\beta)$. Since $\inf_{k \in \N}\beta_{N_k} = 0$, choose a subsequence $(M_k)_{k \in \N}$ of $(N_k)_{k \in \N}$ such that
\begin{equation}
\label{eq:def-Mk}
\frac{1}{\beta_{M_k}} \geq 4^k, \qquad k \in \N.
\end{equation}
Now, consider the partition of $[0,\pi]$ given by the points $\{t_k\}_{k\in\mathbb{N}}$, where $t_k=2\arcsin(2^{-k}), \, k \in \N$. Notice that $t_0=\pi$, $t_{k+1}<t_k$, and $t_k\to 0$ as $k\to\infty$. Associated to this choice, we can find an even continuous function $u \colon [-\pi,\pi] \to [0,+\infty)$ such that $u(t) > 0$ whenever $t\neq 0$, $u$ is increasing in $[0,\pi]$, and
\begin{equation}
\label{eq:prop-u}
u(t) \leq \dfrac{1}{M_k}, \qquad |t| \leq t_k \qquad k \in \N.
\end{equation}

This function $u$ satisfies the assumptions of Lemma \ref{L:compactsingularinner}. In particular, the symbol 
 \[
 \varphi(z)=\exp\left(-\dfrac{1}{2\pi}\int_{-\pi}^{\pi}\dfrac{e^{it}+z}{e^{it}-z}\,u(t)\operatorname{d}\!t-\dfrac{1+z}{1-z}\right), \qquad z \in \D,
 \]
induces a compact operator $C_\varphi\colon H^2\to H^2$. We show that $C_\varphi$ is unbounded on $H^2(\beta)$. Heuristically, the crucial property that $\varphi$ satisfies is that $\|Q(\varphi^n)\|_2\simeq \|\varphi^n\|_2$, that is, every power of $\varphi$ has enough mass outside of the lacunary set $\Lambda$. 

We work out the details with more precision. For $n \in \N$, 
\[
\norm{\varphi^n}_{H^1} 
= 
\dfrac{1}{2\pi}\int_{-\pi}^{\pi}e^{-nu(t)}\operatorname{d}\!t, \qquad 
\norm{\varphi^n}^2_{2} 
= 
\dfrac{1}{2\pi}\int_{-\pi}^{\pi} e^{-2nu(t)} \operatorname{d}\!t.
\]
Fix $0<\varepsilon<\pi$. Splitting the first integral at $|t|=\varepsilon$ and using Cauchy--Schwarz, since $u$ is even and increasing, gives
\[
\norm{\varphi^n}_{H^1} 
\leq \sqrt{\varepsilon/\pi}\,\norm{\varphi^n}_2 + e^{-nu(\varepsilon)}.
\]

Notice that, by construction, $u(0)=\lim_{k\to\infty}u(t_k)=0.$ By continuity at zero, choose $0<\delta<\varepsilon$ with $u(t)\le u(\varepsilon)/2$ for every $|t|<\delta$. Then,
\begin{align*}
    \norm{\varphi^n}^2_{2}
\ge
\dfrac{1}{2\pi}\int_{|t|<\delta} e^{-2nu(t)} \operatorname{d}\!t
\ge 
\frac\delta\pi e^{-nu(\varepsilon)},
\end{align*}
from which we obtain
\[
\frac{\norm{\varphi^n}_{H^1}}{\norm{\varphi^n}_2}
\le
\sqrt{\varepsilon/\pi}+\sqrt{\pi/\delta}\,e^{-nu(\varepsilon)/2}.
\]
Since $u(\varepsilon)>0$, taking a limsup as $n \to \infty$ and then letting $\varepsilon \to 0$ proves that
\[
\lim_{n \to \infty}\dfrac{\norm{\varphi^n}_{H^1}}{\norm{\varphi^n}_2} = 0.
\]

Paley's inequality now gives $\norm{P(\varphi^n)}_2\le C_\Lambda\norm{\varphi^n}_{H^1}=o(\norm{\varphi^n}_2)$. Using the orthogonality of $P$ and $Q$, this proves that for $n$ bigger than some $N_0$,
\begin{align}\label{eq:normQnormphi}
    \norm{Q(\varphi^n)}_2^2 = \norm{\varphi^n}^2_2-\norm{P(\varphi^n)}_2^2\geq \frac{1}{4}\norm{\varphi^n}^2_2.
\end{align}

We now prove that $C_\varphi$ is unbounded on $H^2(\beta)$. Recall that the monomials $e_{n}^{\beta}=z^{n}/\sqrt{\beta_{n}}$ have norm one in $H^2(\beta)$. We show that $\limsup_{n\to\infty}\|C_\varphi e_n^\beta\|_\beta=+\infty.$ By \eqref{eq:normQnormphi} and the definition of $\beta$ and $Q$,
\begin{equation*}
\label{eq:norm-estimate}
\norm{\varphi^n}_\beta 
\geq \norm{Q(\varphi^n)}_\beta 
\geq
c\norm{Q(\varphi^n)}_{2} 
\geq 
\frac{c}{2}
\norm{\varphi^n}_{2},\qquad n\geq N_0,
\end{equation*} 
where $c = \inf_{n \not\in \Lambda}\beta_n > 0$.

By \eqref{eq:def-Mk} and \eqref{eq:prop-u}, considering the subsequence $(M_k)_k$,
\[
\norm{\varphi^{M_k}}_2^2\ge\int_{-t_k}^{t_k}e^{-2M_ku(t)}\,\frac{\operatorname{d}\!t}{2\pi}\ge e^{-2}\frac{t_k}{\pi}\ge\frac{2e^{-2}}\pi\,2^{-k},
\]
where we have also used that $\arcsin(x) \geq x$ for all $0 \leq x \leq 1$. Then, by \eqref{eq:def-Mk} we conclude that, for big enough $k$,
\[
\norm{C_\varphi e_{M_k}^{\beta}}_\beta^2
=
\frac{\norm{\varphi^{M_k}}_\beta^2}{\beta_{M_k}}
\geq
\frac{c\norm{\varphi^{M_k}}_2^2}{2\beta_{M_k}}
\ge
\frac2c \frac{2e^{-2}}{\pi}2^k\xrightarrow[k\to\infty]{}+\infty.
\]
\end{proof}

\subsection{Hilbert--Schmidt}
In this section, we characterize the equality 
$\operatorname{HS}(\beta)=\operatorname{HS}(\mathbf{1})$, under the lacunarity condition. In this case, the assumption $\beta \simeq \mathbf{1}$ breaks down: we can have equality between the classes of Hilbert--Schmidt composition operators when $\liminf_{n\to\infty}\beta_n=0$.

We will use the following property of the Paley projection $P$. For what follows, we need to assume that the constant $q$ for our lacunary set $\Lambda$ satisfies $q>2$.

\begin{lemma}
\label{lemma:projection-squared}
Assume that $q > 2$. Then, $P[(Pf)^2] = 0$ for all $f \in H^1$.
\end{lemma}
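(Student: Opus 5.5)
The plan is to work purely at the level of Taylor coefficients, reducing the statement to an arithmetic fact about sums of two elements of $\Lambda$. Fix $f \in H^1$. By Theorem \ref{thm:Paley}, $Pf \in H^2$, so $(Pf)^2 \in H^1$ and $P[(Pf)^2]$ is well defined. Writing $g = Pf = \sum_{k} \widehat{f}(N_k) z^{N_k}$, the Cauchy product gives, for every $m \in \N$,
\[
\widehat{g^2}(m) = \sum_{\substack{j,k \in \N \\ N_j + N_k = m}} \widehat{f}(N_j)\widehat{f}(N_k).
\]
This product is legitimate because $g \in H^2$ has square-summable coefficients, so each coefficient of $g^2$ is an absolutely convergent finite-type sum; indeed, for fixed $m$ only finitely many pairs occur. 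Hence $P[g^2] = 0$ as soon as no element of $\Lambda$ can be written as $N_j + N_k$ with $j,k \in \N$.

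The key step is the combinatorial claim that $N_j + N_k \notin \Lambda$ for all $j,k$. Assume without loss of generality that $j \le k$, and suppose $N_j + N_k = N_l$. First, since $N_l > N_k$ and the sequence is increasing, we get $l \ge k+1$. Then lacunarity with $q>2$ gives
\[
N_l \ge N_{k+1} \ge q N_k > 2N_k \ge N_j + N_k,
\]
which is a contradiction. One minor point to check is that the $N_k$ are positive, so that the sequence is strictly increasing. If $N_0 = 0$ were allowed, then $q N_0 \le N_1$ is automatic, and the pair $0 + N_k = N_k$ would break the argument. I would therefore note that $N_0 \ge 1$, which is implicit in the requirement $N_{k+1} \ge qN_k$ being a genuine lacunarity condition, or else handle $N_0 = 0$ by excluding it.

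With the claim established, every coefficient of $(Pf)^2$ indexed by $\Lambda$ vanishes, so $P[(Pf)^2] = 0$. I do not expect any real obstacle. The only care needed is justifying the coefficient formula for $g^2$ via $g \in H^2$, together with the positivity issue about $N_0$ just mentioned.
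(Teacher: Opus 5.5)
Your proposal is correct and is essentially the paper's argument: after using Paley's theorem to see that $(Pf)^2\in H^1$, both proofs reduce $P[(Pf)^2]=0$ to the arithmetic fact that $N_j+N_k\notin\Lambda$, which follows from $N_{k+1}\ge qN_k>2N_k$. Your explicit treatment of the case $l\le k$ and of the requirement $N_0\ge 1$ is more careful than the paper, which leaves both implicit. In fact, if $0\in\Lambda$ the lemma itself fails (take $f\equiv 1$), so $N_0\ge 1$ has to be assumed rather than handled separately.
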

\begin{proof}
Notice that we can compute $P[(Pf)^2]$ since $(Pf)^2\in H^1$. Now, let $n,m \in \N$ with $m \geq n$. Then, since $q>2,$ we have that $N_n+N_m \leq 2N_m < N_k$ for all $k > m$. This means that $N_n+N_m \neq N_k$ for all $n,m,k \in \N$. Therefore, $P[z \mapsto z^{N_n}z^{N_m}] = 0$ for all $n,m \in \N$, from which the result follows at once.
\end{proof}

We first show that a set inequality holds without extra assumptions.

\begin{proposition}
Consider $\beta$ as in \eqref{E:beta_lacunary}, where $q > 2$. Then, $\mathrm{HS}(\beta) \subseteq \mathrm{HS}(\mathbf{1})$. More precisely, there exists a constant $C>0$ such that
\[
\|\varphi\|_{\operatorname{HS}(\mathbf{1})}\leq C \|\varphi\|_{\operatorname{HS}(\beta)}, \qquad \varphi\in \operatorname{HS}(\beta).
\]
\end{proposition}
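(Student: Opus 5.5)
The plan is to compare the two Hilbert--Schmidt norms through \eqref{eq:Hs-norm-comp-op}. That is, we bound $\sum_n \norm{\varphi^n}_2^2$ by a multiple of $\sum_n \norm{\varphi^n}_\beta^2/\beta_n$. We split each power as $\varphi^n = P(\varphi^n)+Q(\varphi^n)$, with $P,Q$ the Paley projections of \eqref{eq:defP}--\eqref{eq:defQ}. The $Q$-parts are controlled directly by the weight. The $P$-parts are controlled by combining Paley's inequality (Theorem \ref{thm:Paley}) with Lemma \ref{lemma:projection-squared}, which is where $q>2$ enters.

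\emph{Step 1: the $Q$-parts.} Put $c=\inf_{n\notin\Lambda}\beta_n>0$. Since $Q(\varphi^m)$ has no coefficients on $\Lambda$ and $\beta_m\le 1$, we get
\[
\norm{Q(\varphi^m)}_2^2\le \frac1c\norm{Q(\varphi^m)}_\beta^2\le \frac1c\norm{\varphi^m}_\beta^2\le \frac1c\,\frac{\norm{\varphi^m}_\beta^2}{\beta_m}.
\]
Summing gives $\sum_m \norm{Q(\varphi^m)}_2^2\le c^{-1}\norm{\varphi}_{\operatorname{HS}(\beta)}^2$.

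\emph{Step 2: the $P$-parts of even powers.} Expand $\varphi^{2n}=(P\varphi^n+Q\varphi^n)^2$. By Lemma \ref{lemma:projection-squared}, $P[(P\varphi^n)^2]=0$, so
\[
P(\varphi^{2n}) = P\bigl[\,2P(\varphi^n)Q(\varphi^n)+(Q\varphi^n)^2\bigr].
\]
Applying Theorem \ref{thm:Paley} and Cauchy--Schwarz in $H^1$ yields
\[
\norm{P(\varphi^{2n})}_2\le C_\Lambda\bigl(2\norm{P\varphi^n}_2\norm{Q\varphi^n}_2+\norm{Q\varphi^n}_2^2\bigr).
\]
Since $\norm{P\varphi^n}_2,\norm{Q\varphi^n}_2\le\norm{\varphi^n}_2\le 1$, this gives $\norm{P(\varphi^{2n})}_2\le 3C_\Lambda\norm{Q(\varphi^n)}_2$. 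This estimate is the key step. The danger is that Paley alone gives only $\norm{\varphi^{2n}}_{H^1}=\norm{\varphi^n}_2^2$, which does not sum appropriately. Lemma \ref{lemma:projection-squared} is what removes the term that lives purely on $\Lambda$.

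\emph{Step 3: odd powers and conclusion.} Since $|\varphi|\le1$ on $\T$, we have $\norm{\varphi^{2n+1}}_2\le\norm{\varphi^{2n}}_2$. Hence
\[
\norm{\varphi}_{\operatorname{HS}(\mathbf{1})}^2\le 2\sum_n\norm{\varphi^{2n}}_2^2 = 2\sum_n\bigl(\norm{P\varphi^{2n}}_2^2+\norm{Q\varphi^{2n}}_2^2\bigr).
\]
By Step 2, this is at most $2(9C_\Lambda^2+1)\sum_m\norm{Q\varphi^m}_2^2$. The $n=0$ term needs separate treatment, because $P(1)=1$ may be nonzero if $0\in\Lambda$. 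It is harmless: that term equals $1$ on both sides, since $\norm{1}_\beta^2/\beta_0=1$. Finally, Step 1 bounds $\sum_m\norm{Q\varphi^m}_2^2$ by $c^{-1}\norm{\varphi}_{\operatorname{HS}(\beta)}^2$. Together these give the claimed inequality, with a constant depending only on $C_\Lambda$ and $c$. In particular $\operatorname{HS}(\beta)\subseteq\operatorname{HS}(\mathbf{1})$.
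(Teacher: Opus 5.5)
Your argument is the paper's proof. It uses the same splitting $\varphi^n=P(\varphi^n)+Q(\varphi^n)$, the same use of Lemma~\ref{lemma:projection-squared} and Theorem~\ref{thm:Paley} to get $\|P(\varphi^{2n})\|_2\le 3C_\Lambda\|Q(\varphi^n)\|_2$, and the same reduction of odd powers to even ones. Your Step~1 is slightly more careful than the paper, which drops the factor $1/c$ in \eqref{eq:sum-Q}; your constant $2(9C_\Lambda^2+1)/c$ is the right one.

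The one thing to correct is your remark about $0\in\Lambda$. If $N_0=0$, the problem is not confined to the term $n=0$. Lemma~\ref{lemma:projection-squared} itself fails, because $N_0+N_m=N_m\in\Lambda$. For $q>2$ one computes $P[(Pf)^2]=2\widehat f(0)\,Pf-\widehat f(0)^2$. So for every $n\ge 1$ with $\varphi(0)\neq 0$, the identity in your Step~2 picks up the extra term $2\varphi(0)^nP(\varphi^n)-\varphi(0)^{2n}$. The bound $\|P(\varphi^{2n})\|_2\le 3C_\Lambda\|Q(\varphi^n)\|_2$ is then not justified.

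The clean fix is to note at the outset that, since $\beta_0>0$, you may replace $\Lambda$ by $\Lambda\setminus\{0\}$. This set is still lacunary with the same ratio $q$, and \eqref{E:beta_lacunary} still holds, with $\inf_{n\notin\Lambda\setminus\{0\}}\beta_n=\min(c,\beta_0)>0$. After this reduction $P(1)=0$, Step~2 holds for all $n\ge 0$, and no separate treatment of $n=0$ is needed. The paper tacitly assumes $0\notin\Lambda$ as well.
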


\begin{proof}
Let $\varphi \in \mathrm{HS}(\beta)$. We need to show that
\[
\norm{\varphi}^2_{\mathrm{HS}(\mathbf{1})} = \sum_{n \in \N}\norm{\varphi^n}_2^2 < + \infty.
\]
Using the projection operators introduced before, since $\varphi^n\in H^2$, 
\[
\norm{\varphi^n}_2^2 = \norm{P(\varphi^n)}_2^2 + \norm{Q(\varphi^n)}_2^2, \qquad n\in\mathbb{N}.
\]

By the definition of $\beta$ and $Q$, we have that
\begin{equation}
\label{eq:sum-Q}
\sum_{n \in \N}\norm{Q(\varphi^n)}^2_2 \leq \sum_{n \in \N}\norm{\varphi^n}^2_\beta \leq \sum_{n \in \N}\dfrac{\norm{\varphi^n}^2_\beta}{\beta_n} = \norm{\varphi}^2_{\mathrm{HS}(\beta)} < +\infty.
\end{equation}

It remains to show that $\sum_{n \in \N}\norm{P(\varphi^n)}^2_2 < +\infty$. In order to do so, using Lemma~\ref{lemma:projection-squared}, we see that
\begin{align*}
P(\varphi^{2n}) & = P[(\varphi^n)^2] = P\big[(P(\varphi^n)+Q(\varphi^n))^2\big] \\
& = P\big[(P(\varphi^n))^2+(Q(\varphi^n))^2+2P(\varphi^n)Q(\varphi^n)\big] \\
& = P\big[(Q(\varphi^n))^2+2P(\varphi^n)Q(\varphi^n)\big].
\end{align*}
Then, by Theorem \ref{thm:Paley}, we have that
\[
\norm{P(\varphi^{2n})}_2
 \leq C_\Lambda\norm{(Q(\varphi^n))^2}_{H^1}+2C_\Lambda\norm{P(\varphi^n)Q(\varphi^n)}_{H^1}. 
\]
Next, since $\norm{P(\varphi^n)}_2 \leq \norm{\varphi^n}_2 \leq 1$, and similarly for $Q$, we have that
\begin{align*}
\norm{P(\varphi^{2n})}_2 &\leq C_\Lambda\norm{Q(\varphi^n)}^2_2+2C_\Lambda\norm{P(\varphi^n)}_2\norm{Q(\varphi^n)}_2 \\
& \leq C_\Lambda\norm{Q(\varphi^n)}^2_2+2C_\Lambda\norm{Q(\varphi^n)}_2 \\
& \leq 3C_\Lambda\norm{Q(\varphi^n)}_2.
\end{align*}
 Combining this inequality with \eqref{eq:sum-Q}, we deduce that
\[
\sum_{n \in \N}\norm{\varphi^{2n}}_2^2 
\leq
\sum_{n\in\mathbb{N}}(9 C_\Lambda^2 \|Q(\varphi^{n})\|^2+\|Q(\varphi^{2n})\|^2)
\leq 
(9C_\Lambda^2+1)\|\varphi\|_{\operatorname{HS}(\beta)}^2.
\]
Finally, since
\[
\norm{P(\varphi^{2n+1})}_2
\leq 
C_\Lambda\norm{\varphi^{2n+1}}_2 
\leq
C_\Lambda \norm{\varphi^{2n}}_2,
\]
we conclude that
\[
\|\varphi\|_{\operatorname{HS}(\mathbf{1})}^2
=
\sum_{n \in \N}\norm{\varphi^n}_2^2
\leq
C\|\varphi\|_{\operatorname{HS}(\beta)}^2
< +\infty,
\]
where the constant $C = 10C_{\Lambda}^2+2$ does not depend on $\varphi$. 
\end{proof}

We complete the result with a characterization for the reverse set inequality.

\begin{proposition}\label{P:secondinclusionHSlacunary}
Consider $\beta$ as in \eqref{E:beta_lacunary}, where $q > 2$. Then, $\mathrm{HS}(\mathbf{1})\subseteq \mathrm{HS}(\beta)$ if and only if $\inf_{k \in \N}\beta_{N_k}N_k>0$. Moreover, in this case, there exists a constant $C>0$ such that
\[
\|\varphi\|_{\operatorname{HS}(\beta)}\leq C\|\varphi\|_{\operatorname{HS}(\mathbf{1})}, \qquad\varphi\in\operatorname{HS}(\mathbf{1}).
\]
 
\end{proposition}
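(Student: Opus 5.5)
For the sufficiency direction I would use the formula \eqref{eq:Hs-norm-comp-op} and split $\|\varphi\|_{\operatorname{HS}(\beta)}^2=\sum_n \|\varphi^n\|_\beta^2/\beta_n$ into $n\notin\Lambda$ and $n\in\Lambda$. Since $\beta_m\le 1$ for all $m$, we have $\|\varphi^n\|_\beta\le\|\varphi^n\|_2$. Hence the non-lacunary part is at most $c^{-1}\sum_n\|\varphi^n\|_2^2=c^{-1}\|\varphi\|_{\operatorname{HS}(\mathbf 1)}^2$, where $c=\inf_{n\notin\Lambda}\beta_n$. If $\beta_{N_k}\ge \delta/N_k$, the lacunary part is bounded by $\delta^{-1}\sum_k N_k\|\varphi^{N_k}\|_2^2$. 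The sequence $\|\varphi^n\|_2$ is non-increasing in $n$ because $|\varphi|\le1$ on $\T$. Therefore
\[
N_k\|\varphi^{N_k}\|_2^2\le \frac{N_k}{N_k-N_{k-1}}\sum_{N_{k-1}<n\le N_k}\|\varphi^n\|_2^2\le \frac{q}{q-1}\sum_{N_{k-1}<n\le N_k}\|\varphi^n\|_2^2 .
\]
Summing over $k$ gives $\|\varphi\|_{\operatorname{HS}(\beta)}^2\le C\|\varphi\|_{\operatorname{HS}(\mathbf 1)}^2$ with $C$ depending only on $c,\delta,q$.

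For necessity, suppose $\inf_k\beta_{N_k}N_k=0$. I would pick a subsequence $(M_k)$ of $\Lambda$ with $M_k\beta_{M_k}\le 4^{-k}$ and $M_{k+1}\ge e^2M_k$. As in Theorem \ref{T:lacunaryKsec4}, I take $\varphi$ from \eqref{eq:def-varphi} with an even continuous $u$ that is increasing on $[0,\pi]$, vanishes only at $0$, and satisfies $|\varphi|=e^{-u}$ a.e. on $\T$. The argument given there (which used only these properties of $u$) shows that $\|\varphi^n\|_{H^1}/\|\varphi^n\|_2\to0$. Paley's theorem then gives $\|\varphi^n\|_\beta\ge c\|Q(\varphi^n)\|_2\ge \frac c2\|\varphi^n\|_2$ for large $n$. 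Consequently
\[
\|\varphi\|_{\operatorname{HS}(\beta)}^2\ \ge\ \frac{c^2}{4}\sum_k \frac{\|\varphi^{M_k}\|_2^2}{\beta_{M_k}}\ \ge\ \frac{c^2e^{-2}}{4}\sum_k\frac{\lambda(1/M_k)}{\beta_{M_k}},
\]
where $\lambda(s)=m(\{t:u(t)<s\})$ is the distribution function of $u$. On the other hand, by \eqref{eq:HSHardyintegral},
\[
\|\varphi\|_{\operatorname{HS}(\mathbf 1)}^2=\int_\T\frac{dm}{1-e^{-2u}}\lesssim 1+\int_\T \frac{dm}{u}=1+\int_0^\infty\frac{\lambda(s)}{s^2}\,ds .
\]

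The main step, and the delicate one, is choosing $\lambda$ (equivalently $u$) so that $\int\lambda(s)s^{-2}\,ds<\infty$ while $\lambda(1/M_k)\approx 2^{-k}/M_k$. With that choice $\sum_k\lambda(1/M_k)/\beta_{M_k}\ge\sum_k 2^{-k}4^k=\infty$. A naive choice such as $\lambda(s)=s\,2^{-k}$ on the whole range $[1/M_{k+1},1/M_k]$ fails, because it produces $\log(M_{k+1}/M_k)$ factors. I would instead build a continuous increasing $\lambda$ with two kinds of pieces:
\begin{itemize}
\item on $[1/M_k,e/M_k]$, take $\lambda(s)=2^{-k}s$, which contributes $2^{-k}$ to the integral;
\item on $[e/M_k,1/M_{k-1}]$, keep $\lambda$ essentially constant at about $e2^{-k}/M_k$, allowing a negligible linear rise to reach $2^{-(k-1)}/M_{k-1}$; this contributes at most $\lambda\cdot M_k/e\lesssim 2^{-k}$.
\end{itemize}
Summing over $k$ gives a finite integral. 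An increasing $\lambda$ with $\lambda(0^+)=0$ is realized by an even, increasing, continuous $u$ vanishing only at $0$ by taking $u$ to be the inverse of $t\mapsto\lambda^{-1}$ on the half-width $\lambda/2$. This $\varphi$ therefore lies in $\operatorname{HS}(\mathbf 1)\setminus\operatorname{HS}(\beta)$, which completes the necessity direction.
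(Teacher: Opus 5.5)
Your sufficiency argument is the paper's own: split off $\Lambda$, use $\beta_n\le 1$, then use the monotonicity of $\|\varphi^n\|_2$ together with $N_{k-1}\le N_k/q$. For necessity you take a genuinely different route, and it works once the one step below is stated precisely.

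The paper uses an outer function with piecewise constant boundary modulus. It sets $|\varphi|^2=1-1/M_k$ on disjoint sets $E_k$ with $m(E_k)=\varepsilon/(2^kM_k)$, and $|\varphi|^2=1/4$ elsewhere. Membership in $\operatorname{HS}(\mathbf 1)$ is then a one-line computation. The price is that the Paley step must be done by hand at $n=M_k$. One bounds $\|\varphi^{M_k}\|_{H^1}$ by Cauchy--Schwarz on $E=\bigcup_k E_k$ plus a term of order $2^{-M_k}$, and tunes $\varepsilon$ so that $2C_\Lambda^2 m(E)\le 1/4$; this gives $\|Q(\varphi^{M_k})\|_2^2\ge m(E_k)/8$.

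You instead reuse the single-peak symbol of Theorem~\ref{T:lacunaryKsec4}. That ratio argument only uses that $u$ is even, monotone on $[0,\pi]$, tends to $0$ at $0$ and is positive elsewhere. So you get $\|Q(\varphi^n)\|_2\ge\frac12\|\varphi^n\|_2$ for all large $n$, uniformly and with no tuning. The price on your side is that membership in $\operatorname{HS}(\mathbf 1)$ becomes a design problem for the distribution function $\lambda$, which your two-scale construction solves. Both constructions in fact give $\|C_\varphi e^\beta_{M_k}\|_\beta\to\infty$, so the symbol is not even bounded on $H^2(\beta)$.

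The step to state more carefully is the rise on $[e/M_k,1/M_{k-1}]$.
\begin{itemize}
\item Going from $e2^{-k}/M_k$ to $2^{-(k-1)}/M_{k-1}$ multiplies $\lambda$ by $2M_k/(eM_{k-1})$. The rise is therefore not small in amplitude, and your bound $\lambda\cdot M_k/e$ covers only the flat part.
\item A linear rise over the whole interval would contribute roughly $2^{-(k-1)}\log\bigl(M_k/(eM_{k-1})\bigr)$. That is exactly the logarithmic loss you set out to avoid. Since the gaps $M_k/M_{k-1}$ are dictated by where $\beta$ is small, the resulting series can diverge.
\item The fix is to keep $\lambda$ strictly increasing with tiny slope on the flat part and confine the rise to $[(1-\eta)/M_{k-1},1/M_{k-1}]$. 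There it contributes at most $\lambda(1/M_{k-1})\cdot\eta M_{k-1}/(1-\eta)=2^{-(k-1)}\eta/(1-\eta)$, which is summable in $k$.
\end{itemize}
The realization step should also be written out: $u(t)=\lambda^{-1}(|t|/\pi)$ for $|t|\le\pi$, with $\lambda$ continuous, strictly increasing and reaching $1$. Then $m(\{u<s\})=\lambda(s)$, and $u$ is even, continuous, increasing on $[0,\pi]$ and vanishes only at $0$.
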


\begin{proof}
First, we assume that $\inf_{k \in \N}\beta_{N_k}N_k>0$. We consider $\varphi \in \mathrm{HS}(\mathbf{1})$, that is,
\[
\|\varphi\|_{\operatorname{HS}(1)}^2=\sum_{n \in \N}\norm{\varphi^n}^2_2 < +\infty,
\]
and we prove that $\varphi\in\operatorname{HS}(\beta)$.

Since we are assuming that $c\leq \beta_n\leq 1$ uniformly for $n \not\in \Lambda$, we have that
\begin{align}
\label{eq:estimate-norm-HSbeta}
\norm{\varphi}^2_{\mathrm{HS}(\beta)} & = \sum_{n \in \N}\dfrac{\norm{\varphi^n}^2_\beta}{\beta_n} \leq \sum_{n \in \N} \dfrac{\norm{\varphi^n}^2_2}{\beta_n} \\
& 
\leq \dfrac{1}{c}\sum_{n \in \N \setminus \Lambda}\norm{\varphi^n}^2_2 + \sum_{n \in \Lambda}\dfrac{1}{\beta_n}\norm{\varphi^n}^2_2 \notag \\
& 
\leq 
\dfrac{1}{c} \|\varphi\|_{\operatorname{HS}(\mathbf{1})}^2 +  \sum_{k\in\mathbb{N}}\frac{1}{\beta_{N_k}}\norm{\varphi^{N_k}}^2_2 \notag.
\end{align}

By assumption, 
\[
C_1 :=\inf_{k}\beta_{N_k} N_k >0.
\]
Then, 
\begin{align*}
      \sum_{k\in\mathbb{N}}\frac{1}{\beta_{N_k}}\norm{\varphi^{N_k}}^2_2 
      &\leq
      \frac{1}{C_1} \sum_{k\in\mathbb{N}}N_k\norm{\varphi^{N_k}}^2_2.
\end{align*}

Now, notice that $\norm{\varphi^{n}}^2_2$ is a decreasing function of $n$. Then,
\[
N_0\norm{\varphi^{N_0}}^2_2 \leq \sum_{n=0}^{N_0}\norm{\varphi^n}^2_2.
\]
Similarly, for each $k \in \N$ we have
\[
N_{k+1}\norm{\varphi^{N_{k+1}}}^2_2 \leq \dfrac{q}{q-1}(N_{k+1}-N_k)\norm{\varphi^{N_{k+1}}}^2_2 \leq \dfrac{q}{q-1}\sum_{n=N_k+1}^{N_{k+1}}\norm{\varphi^n}^2_2.
\]
All in all,
\[ \sum_{k\in\mathbb{N}}N_k\norm{\varphi^{N_k}}^2_2
\leq
\frac{q}{q-1} \sum_{n = 0}^{\infty}\norm{\varphi^{n}}^2_2 < +\infty.
\]

In particular, we have proved 
\[
\|\varphi\|_{\operatorname{HS}(\beta)}\leq C \|\varphi\|_{\operatorname{HS}(\mathbf{1})},
\]
where $C$ does not depend on $\varphi$.

We move on to the necessity part. Assume that $\mathrm{HS}(\mathbf{1})\subseteq \mathrm{HS}(\beta)$. We prove that $\inf_{k \in \N}\beta_{N_k}N_k > 0$ by contradiction: we suppose that $\inf_{k \in \N}\beta_{N_k}N_k=0$, and we construct $\varphi\in \operatorname{HS}(\mathbf{1})\setminus\operatorname{HS}(\beta)$.

Assuming that $\inf_{k \in \N}\beta_{N_k}N_k = 0$, we may choose an increasing subsequence $(M_k)_{k \in \N}$ of $(N_k)_{k \in \N}$ such that
\begin{equation}
\label{eq:choice-1}
\dfrac{1}{\beta_{M_k}{M_k}} \geq 4^k, \qquad k \in \N.
\end{equation}
By construction, $M_{k+1} \geq qM_k > 2M_k$. Therefore,
\[
\lim_{k \to \infty} \frac{2^kM_k}{2^{2M_k-1}}=0.
\]
In particular, for every $\varepsilon>0$ there exists $S_\varepsilon\in\mathbb{N}$ such that
\begin{equation}\label{eq:choice-2}
\frac{C_{\Lambda}^2}{2^{2M_k-1}}\leq \frac{\varepsilon}{16 \cdot 2^kM_k}, \qquad k\geq S_\varepsilon,
\end{equation}
where $C_{\Lambda}$ is as in Theorem \ref{thm:Paley}. The value $\varepsilon > 0$ 
will be fixed later. 

Now, choose pairwise disjoint measurable sets $E_k \subseteq \partial\D$ such that
\[
m(E_k) = \dfrac{\varepsilon}{2^kM_k}, \qquad k \in \N,
\]
where $m$ denotes normalized Lebesgue measure on $\partial\D$. Here, we assume that $\varepsilon > 0$ is small enough so that $m(E) \in (0,1)$, where $E = \cup_{k \in \N}E_k$.

Define a measurable function $\rho \colon \partial\D \to (0,1)$ by
\begin{equation}
\label{eq:rho}
\rho^2 =
\begin{cases}
1-\dfrac{1}{M_k}, & \text{on } E_k,\\
\dfrac{1}{4}, & \text{on } \partial\D \setminus E.
\end{cases}
\end{equation}
Notice that $\log(\rho)\in L^\infty(\partial\D)$ and $\rho < 1$ on $\partial\D$. Therefore, there exists an outer function $\varphi \colon \D \to \D$ such that $\abs{\varphi}=\rho$ almost everywhere on
$\partial\D$. This function $\varphi$ satisfies $\varphi\in \operatorname{HS}(\mathbf{1})$: indeed, using \eqref{eq:HSHardyintegral},
\[
\int_{\partial\D}\dfrac{1}{1-|\varphi|^2}\operatorname{d}\!m = \dfrac{4}{3}(1-m(E))+\sum_{k \in \N}m(E_k)M_k = \dfrac{4}{3}(1-m(E)) + \varepsilon \sum_{k \in \N}\dfrac{1}{2^k} < +\infty.
\]

We prove that $\varphi \not\in \mathrm{HS}(\mathbf{\beta})$, whence the contradiction. In particular, we prove that there exist choices of $\varepsilon$ for which
\[
\sum_{k\geq S_\varepsilon}\frac{\|\varphi^{M_k}\|^2_\beta}{\beta_{M_k}}=+\infty.
\]
First of all, notice that
\[
\norm{\varphi^{M_k}}_\beta^2
\geq 
\norm{Q(\varphi^{M_k})}_\beta^2 
=\sum_{n\notin\Lambda}\beta_n|\widehat{\varphi^{M_k}}(n)|^2
\geq c\sum_{n\notin\Lambda}|\widehat{\varphi^{M_k}}(n)|^2
=
c\norm{Q(\varphi^{M_k})}_2^2.
\]

Using the boundedness of the Paley projection in Theorem \ref{thm:Paley},
\[
\norm{Q(\varphi^{M_k})}_2^2 
=
\norm{\varphi^{M_k}}_2^2 - \norm{P(\varphi^{M_k})}_2^2
\geq  
\norm{\varphi^{M_k}}_2^2 - C_{\Lambda}^2\norm{\varphi^{M_k}}_{H^1}^2.
\]
On the one hand,
\begin{equation}\label{eq:normH2phiMk}  
\norm{\varphi^{M_k}}_2^2 
\geq 
\int_E\rho^{2M_k}\operatorname{d}\!m, 
\qquad k \in \N.  
\end{equation}

We focus on the quantity $\norm{\varphi^n}_{H^1}$, for $n\in\mathbb{N}$. By the Cauchy--Schwarz inequality,  
\[
\norm{\varphi^n}_{H^1} 
=
\int_E\rho^n\operatorname{d}\!m+\dfrac{1-m(E)}{2^n} 
\leq
\left(\int_E\rho^{2n}\operatorname{d}\!m\right)^\frac12 m(E)^{\frac12}+\dfrac{1-m(E)}{2^n}.
\]
Therefore,
\begin{equation}
\label{eq:H1-control}
C_{\Lambda}^2\norm{\varphi^n}^2_{H^1} 
\leq 2C_{\Lambda}^2m(E)\int_E\rho^{2n}\operatorname{d}\!m
+
C_{\Lambda}^2\dfrac{(1-m(E))^2}{2^{2n-1}}.
\end{equation}

If $n=M_k$ for some $k \geq S_\varepsilon$, by \eqref{eq:choice-2} we have that
\begin{equation}
\label{eq:observation-estimate}
C_{\Lambda}^2\dfrac{(1-m(E))^2}{2^{2M_k-1}}
\leq 
\dfrac{C_{\Lambda}^2}{2^{2M_k-1}} 
\leq 
\dfrac{\varepsilon}{16 M_k2^k} 
= 
\frac{m(E_k)}{16}. 
\end{equation}
On the other hand, we have
\begin{equation}
\label{eq:intE-estimate}
\int_E\rho^{2M_k}\operatorname{d}\!m 
\geq
\int_{E_k}\rho^{2M_k}\operatorname{d}\!m
=
\left(1-\dfrac{1}{M_k}\right)^{M_k}m(E_k) 
\geq 
\dfrac{m(E_k)}{4}.
\end{equation}
Combining \eqref{eq:H1-control}, \eqref{eq:observation-estimate} and \eqref{eq:intE-estimate},
\begin{equation}
C_{\Lambda}^2\norm{\varphi^n}^2_{H^1} 
\leq
(2m(E)C_{\Lambda}^2+\frac14) \int_E\rho^{2n}\operatorname{d}\!m.
\end{equation}
At this point we fix $\varepsilon$ such that
\[
2m(E)C_{\Lambda}^2=2C_{\Lambda}^2\varepsilon\sum_{k\in\N}\frac{1}{2^k M_k}\leq \frac14.
\]
Now, using also \eqref{eq:normH2phiMk}, we conclude that for every $k \geq S_\varepsilon$
\[
\norm{Q(\varphi^{M_k})}_2^2 
\geq  
\norm{\varphi^{M_k}}_2^2 - C_{\Lambda}^2\norm{\varphi^{M_k}}_{H^1}^2
\geq 
\dfrac{1}{2}\int_E\rho^{2M_k}\operatorname{d}\!m
\geq 
\frac{m(E_k)}{8},
\]
where in the last inequality we used \eqref{eq:intE-estimate}. We are in a position to conclude that $\varphi \not\in \mathrm{HS}(\mathbf{\beta})$, as desired. In fact,
\[
\sum_{k\geq S_\varepsilon}\frac{\|\varphi^{M_k}\|^2_\beta}{\beta_{M_k}}
\geq
\frac18 \sum_{k\geq S_\varepsilon} \frac{m(E_k)}{\beta_{M_k}}
=
\frac18\sum_{k \geq S_\varepsilon}\dfrac{1}{2^k\beta_{M_k}M_k}
\geq
\frac18\sum_{k \geq S_\varepsilon}2^k
=+\infty,
\]
where we have used \eqref{eq:choice-1} in the last inequality.
\end{proof}

\section{Further results}
\label{sec:further}

An even more ambitious version of the question posed in the Introduction is as follows.

\textbf{Question.} For a given operator-theoretic property, when does the class of composition operators on $H^2(\beta)$ with such a property coincide with the analogous class on $H^2(\beta')$?

Guided by this question, in this section we give sufficient conditions so that the inclusion $\mathrm{HS}(\beta) \subseteq \mathrm{HS}(\beta')$ holds. Our ideas are motivated by the proof of \cite[Theorem 3.3]{LLQRP}. To this extent, we first give a reformulation of \cite[Lemma 3.6]{LLQRP} (see also \cite{Katsnelson}).

\begin{lemma}
\label{lemma:HS-to-HS}
Let $A \colon \ell^2 \to \ell^2$ be a bounded operator represented by the matrix $(a_{m,n})_{m,n \in \N}$ with respect to the canonical basis $(e_n)_{n \in \N}$. Let $d = (d_n)_{n \in \N}$ be a sequence of positive numbers such that, for all $m,n \in \N$, it holds that $a_{m,n} = 0$ whenever $d_m > d_n$. Then,
$$\norm{DAD^{-1}}_{\mathcal{S}^2} \leq \norm{A}_{\mathcal{S}^2},$$
where $D$ is the (possibly unbounded) diagonal operator with entries given by $d$ (i.e., $Da = (d_na_n)_{n \in \N}$ for all $a = (a_n)_{n \in \N} \in \ell^2$).
\begin{proof}
Simply notice that
$$\left<DAD^{-1}e_n,e_m\right>_{\ell^2} = \dfrac{d_m}{d_n}a_{m,n}, \qquad n,m \in \N.$$
By hypothesis, if $d_m/d_n > 1$, then $\left<DAD^{-1}e_n,e_m\right> = 0$. Therefore, the result follows.
\end{proof}
\end{lemma}

Our main result in this section builds on the following definition, which is a generalization of the one given in \cite[Theorem 3.3]{LLQRP}.

\begin{definition}
Let $\beta = (\beta_n)_{n \in \N}$ and $\beta' = (\beta'_n)_{n \in \N}$ be weights. We say that $\beta'$ is weakly decreasing with respect to $\beta$ if the weight $(\beta'_n/\beta_n)_{n \in \N}$ is weakly decreasing. Namely, if for every $\delta > 0$ there exists a constant $C = C(\delta) > 0$ such that
$$\dfrac{\beta'_m}{\beta_m} \leq C\dfrac{\beta'_n}{\beta_n}, \qquad m > (1+\delta)n.$$
\end{definition}
Notice, in particular, that $\beta'$ is weakly decreasing with respect to $\mathbf{1}$ if and only if $\beta'$ is weakly decreasing in the sense of \cite{LLQRP}.
\begin{proposition}
\label{prop:rwd-implies-inclusion}
Let $\beta$ and $\beta'$ be weights. Assume that $\beta$ satisfies \eqref{eq:cond-D}, while
\begin{equation}
\label{eq:cond-exact-D}
\lim_{n \to \infty}\beta'^{1/n}_n = 1.
\end{equation}
If $\beta'$ is weakly decreasing with respect to $\beta$, then
$$\mathrm{HS}(\mathbf{\beta}) \cap \{\varphi \in \mathrm{Hol}(\D,\D) : \varphi(0) = 0\} \subseteq \mathrm{HS}(\beta').$$
\begin{proof}
Let $\varphi \in \mathrm{HS}(\mathbf{\beta})$ with $\varphi(0) = 0$. If $\abs{\varphi'(0)} = 1$ then $\varphi$ is an automorphism of the unit disk. Then, $C_{\varphi}$ is compact and bijective on $H^2(\beta)$. Since $H^2(\beta)$ is infinite dimensional, this is not possible. Therefore, we deduce that $\abs{\varphi'(0)} < 1$. By \cite[Lemma 3.4]{LLQRP}, there exists $\rho > 0$ so that
$$\abs{\widehat{\varphi^n}(m)} \leq e^{m-(1+\rho)n}, \qquad n,m \in \N.$$
In particular, taking $\delta = \rho/2$, we deduce that
\begin{equation}
\label{eq:bound-widehatvarphinm-2}
\abs{\widehat{\varphi^n}(m)} \leq e^{-\delta n}, \qquad n,m \in \N, \, m \leq (1+\delta)n.
\end{equation}

Now, let $\gamma = (\beta'_n/\beta_n)_{n \in \N}$. Since $\gamma$ is weakly decreasing, we can find $C = C(\delta) >~0$ so that $\gamma_m \leq C\gamma_n$ whenever $m \geq (1+\delta)n$. In fact, let $\widetilde{\gamma} = (\widetilde{\gamma}_n)_{n \in \N}$ be a new weight given by
$$\widetilde{\gamma}_n := \max(\{\gamma_n, \sup(\{\gamma_m : m > (1+\delta)n\})\}).$$
It is easy to check that:
\begin{enumerate}[\hspace{0.5cm}\rm(a)]
\item $\gamma$ and $\widetilde{\gamma}$ are equivalent, since $\gamma_n \leq \widetilde{\gamma}_n \leq C\gamma_n$ for all $n \in \N$.
\item $\widetilde{\gamma}$ is ``almost non-increasing'', since $\widetilde{\gamma}_m \leq \widetilde{\gamma}_n$ whenever $m \geq (1+\delta)n$, $n,m \in \N$.
\end{enumerate}

The result follows from these properties. In fact, by \eqref{eq:Hs-norm-comp-op}, we need to prove that
$$S := \sum_{n,m \in \N} \dfrac{\beta'_m}{\beta'_n}\abs{\widehat{\varphi^n}(m)}^2 < +\infty.$$
But, since $\varphi(0) = 0$, it is clear that $\widehat{\varphi^n}(m) = 0$ if $m < n$. Then,
$$S = \sum_{n = 1}^{\infty}\sum_{m = n}^{\infty} \dfrac{\beta'_m}{\beta'_n}\abs{\widehat{\varphi^n}(m)}^2.$$

On one hand, since $\beta$ and $\beta'$ satisfy \eqref{eq:cond-D} we see that there exists $N \in \N$ so that $e^{-xn} \leq \beta_n$ and $e^{-xn} \leq \beta'_n$ for all $n \geq N$, where $x = \delta/(\delta+2)$. Since $\beta'$ further satifies \eqref{eq:cond-exact-D}, we may also assume that $\beta'_n \leq e^{xn}$ for all $n \geq N$. Hence, we may find $C,c > 0$ so that $ce^{-xn} \leq \beta'_n$ and $ce^{-xn} \leq \beta'_n \leq Ce^{xn}$ for all $n \in \N$. Using these facts and \eqref{eq:bound-widehatvarphinm-2}, we have that
\begin{align}
\label{eq:is-it-avoidable}
S_1 := & \sum_{n = 1}^{\infty}\sum_{m = n}^{(1+\delta)n} \dfrac{\beta'_m}{\beta'_n}\abs{\widehat{\varphi^n}(m)}^2 \leq \dfrac{C}{c}\sum_{n = 1}^{\infty}\sum_{m = n}^{(1+\delta)n} \dfrac{e^{xm}}{e^{-xn}}e^{-2\delta n} \\
\leq & \dfrac{C}{c}\sum_{n = 1}^{\infty}(n\delta+1) e^{x(1+\delta)n}e^{xn}e^{-2\delta n}
=  \dfrac{C}{c}\sum_{n = 1}^{\infty}(n\delta+1) e^{-\delta n} < +\infty. \notag
\end{align}

It remains to show that
\begin{align*}
S_2 := & S - S_1 = \sum_{n = 1}^{\infty}\sum_{m = (1+\delta)n+1}^{\infty} \dfrac{\beta'_m}{\beta'_n}\abs{\widehat{\varphi^n}(m)}^2 \\
= & \sum_{n = 1}^{\infty}\sum_{m = (1+\delta)n+1}^{\infty} \dfrac{\gamma_m}{\gamma_n}\dfrac{\beta_m}{\beta_n}\abs{\widehat{\varphi^n}(m)}^2 < +\infty.
\end{align*}
By (a), this is equivalent to
$$\widetilde{S}_2 := \sum_{n = 1}^{\infty}\sum_{m = (1+\delta)n+1}^{\infty} \dfrac{\widetilde{\gamma_m}}{\widetilde{\gamma_n}}\dfrac{\beta_m}{\beta_n}\abs{\widehat{\varphi^n}(m)}^2 < +\infty.$$

But notice that $\widetilde{S}_2$ is the square of the Hilbert--Schmidt norm of some operator. In fact, $\widetilde{S}_2 = \|D\widetilde{A}D^{-1}\|^2_{\mathrm{HS}}$, where $A \colon \ell^2 \to \ell^2$ is the operator whose matrix representation $(a_{m,n})_{m,n \in \N}$ is given by
$$a_{m,n} = 
\begin{cases}
0, & \text{if } m \leq (1+\delta)n, \\
\sqrt{\dfrac{\beta_m}{\beta_n}}\widehat{\varphi^n}(m), & \text{otherwise},
\end{cases}
$$
and $D$ is the diagonal operator with entries $\widetilde{\gamma}$. By (b), if $\widetilde{\gamma}_m > \widetilde{\gamma}_n$, then $m < (1+\delta)n$, and so $a_{m,n} = 0$. Therefore, by Lemma \ref{lemma:HS-to-HS}, we have that
$$S_2 \leq \|\widetilde{A}\|^2_{\mathrm{HS}} = \sum_{n = 1}^{\infty}\sum_{m = (1+\delta)n+1}^{\infty}\dfrac{\beta_m}{\beta_n}\abs{\widehat{\varphi^n}(m)}^2 \leq \|C_{\varphi}\|^2_{\mathrm{HS}} < +\infty.$$ 
\end{proof}
\end{proposition}

Recall that every composition operator can be decomposed as $C_{\varphi} = C_{\varphi_0} \circ C_T$, where $T$ is an automorphism of $\D$, $\varphi_0(0) = 0$, and $\varphi = T \circ \varphi_0$. If all composition operators associated to automorphisms are bounded, since the class of Hilbert--Schmidt operators is an ideal (cf. \cite[p. 27]{Zhu}), we have that $C_{\varphi}$ is a Hilbert--Schmidt operator if and only if $C_{\varphi_0}$ is. Using this reasoning, the combination of \cite[Theorem 1.1]{LLQRP} and Proposition \ref{prop:rwd-implies-inclusion} yields the following results.

\begin{corollary}
Let $\beta$ and $\beta'$ be slowly oscillating weights. Assume that $\beta$ satisfies \eqref{eq:cond-D}, while
$$\lim_{n \to \infty}\beta'^{1/n}_n = 1.$$
If $\beta'$ is weakly decreasing with respect to $\beta$, then $\mathrm{HS}(\mathbf{\beta}) \subseteq \mathrm{HS}(\beta')$.
\end{corollary}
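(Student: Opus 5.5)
Given $\varphi\in\mathrm{HS}(\beta)$, the plan is to factor $\varphi = T\circ\varphi_0$, where $T$ is the disk automorphism with $T(\varphi_0(0))$ arranged so that $\varphi_0(0)=0$. Concretely, let $a=\varphi(0)$, set $\varphi_0 = B_a\circ\varphi$ (up to a rotation), and take $T=B_a^{-1}$. Then $\varphi_0(0)=0$ and $C_\varphi = C_{\varphi_0}\circ C_T$. Since $\varphi$ and $\varphi_0$ are both self-maps of $\D$, everything reduces to controlling $C_T$ and $C_{T^{-1}}$ on $H^2(\beta)$ and on $H^2(\beta')$.

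The first step is to check that all automorphism composition operators are bounded on both spaces. Here I would invoke \cite[Theorem 1.1]{LLQRP}, which asserts, in the form I intend to use it, that for a weight satisfying \eqref{eq:cond-D}, slow oscillation is exactly what is needed for every automorphism (equivalently, every $C_{B_a}$) to act boundedly. This applies directly to $\beta$. For $\beta'$, the hypothesis $\lim\beta'^{1/n}_n=1$ gives \eqref{eq:cond-D}, so the theorem applies as well. Hence $C_T$ and $C_{T^{-1}}$ are bounded on $H^2(\beta)$ and on $H^2(\beta')$.

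Next I use the ideal property of $\mathcal S^2$ on $H^2(\beta)$. Since $C_{\varphi_0} = C_\varphi\circ C_{T^{-1}}$ (indeed $\varphi_0 = T^{-1}\circ\varphi$ gives $C_{\varphi_0}f = f\circ T^{-1}\circ\varphi = C_\varphi C_{T^{-1}} f$), and $C_\varphi\in\mathcal S^2(H^2(\beta))$ while $C_{T^{-1}}$ is bounded there, we get $C_{\varphi_0}\in\mathcal S^2(H^2(\beta))$, i.e. $\varphi_0\in\mathrm{HS}(\beta)$ with $\varphi_0(0)=0$. Proposition \ref{prop:rwd-implies-inclusion} then applies verbatim, since its hypotheses on $\beta,\beta'$ are exactly ours, and yields $\varphi_0\in\mathrm{HS}(\beta')$. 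Finally, $C_\varphi = C_{\varphi_0}\circ C_T$ on $H^2(\beta')$, with $C_T$ bounded there, so the ideal property again gives $\varphi\in\mathrm{HS}(\beta')$.

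The only delicate point is the exact content of \cite[Theorem 1.1]{LLQRP}. If that result requires slow oscillation together with some growth condition, the conditions $\lim\beta_n'^{1/n}=1$ and \eqref{eq:cond-D} should supply it. I expect the remainder to be a formal bookkeeping of compositions.
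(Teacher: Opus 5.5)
Your proposal is correct and matches the paper's argument: factor $\varphi = T\circ\varphi_0$ with $T$ an automorphism and $\varphi_0(0)=0$, use \cite[Theorem 1.1]{LLQRP} to get bounded automorphism-induced composition operators on both $H^2(\beta)$ and $H^2(\beta')$, and transfer the Hilbert--Schmidt property through the ideal property of $\mathcal{S}^2$ and Proposition~\ref{prop:rwd-implies-inclusion}. Your hedge about extra growth conditions is unnecessary, because slow oscillation already forces $\beta_n$ (and $\beta'_n$) to grow and decay at most polynomially, so $\beta_n^{1/n}\to 1$ automatically.
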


\section{Final remarks}
\label{sec:questions}
Despite our results, the questions (\textbf{Q1}) and (\textbf{Q2}) stated in the Introduction remain open in the general case. We would like to propose some further questions in a similar spirit.

The first of them was indicated in the Introduction. The other correspond to generalizations of (\textbf{Q1}) and (\textbf{Q2}), and also to Corollary \ref{cor:h2}.

\begin{itemize}
    
\item[(\textbf{Q3})] Does there exist a weight $\beta$ satisfying \eqref{eq:zero-liminf-limsup-infty} such that $\operatorname{K}(\beta) = \operatorname{K}(\mathbf{1})$?

\item[(\textbf{Q4})] Let $\beta$ and $\beta'$ be weights. When does $\operatorname{B}(\beta) = \operatorname{B}(\beta')$? When does $\operatorname{K}(\beta) = \operatorname{K}(\beta')$? For $1 \leq p < \infty$, when does $\operatorname{S}_p(\beta) = \operatorname{S}_p(\beta')$?

\item[(\textbf{Q5})] Let $\beta$ and $\beta'$ be weights. Assume that $\operatorname{B}(\beta) = \operatorname{B}(\beta')$ and $\operatorname{K}(\beta) = \operatorname{K}(\beta')$. Is it necessary that $\beta \simeq \beta'$?
\end{itemize}

\textbf{Statement on the use of AI tools.}
The reader may have noticed that the proofs of many important results of this paper are done by arguing by contradiction, and constructing an exotic symbol $\varphi$ that has or does not have a specific property. This is for example the case of Proposition \ref{P:betabounded}, Theorem \ref{T:lacunaryKsec4} or the necessity part of Proposition \ref{P:secondinclusionHSlacunary}. The downside of this technique is that, sometimes, one single counterexample does not properly illustrate the reason behind the phenomenon. In general, a direct (possibly constructive) proof is more enlightening.

However, this is a very natural approach to the theory of composition operators on spaces of analytic functions, due to the great diversity of symbols and spaces: several papers in this field are mostly example-based (e.g., \cite{LLQRP-Examples}). Also, in some instances, it can produce stronger statements in a rather easy way. Take for example the case of Proposition~\ref{P:betabounded}. A similar result for the case of boundedness was already established (namely, that if $B(\beta)=B(\mathbf{1})$ then $\beta$ is bounded) with a direct proof. Arguing by contradiction, assuming that $\beta$ is unbounded, we found an example $\phi$ that not only belongs to $B(\mathbf{1})\setminus B(\beta)$, but also to the smaller set $S_1(\mathbf{1})\setminus B(\beta)$, therefore giving a stronger result. 

Finally, this approach of looking for counterexamples fits very naturally in this new era of mathematics, in which the researchers also have at disposal LLMs, who possess striking computational capabilities and have access to an incredibly vast quantity of information and scientific literature.

In our case, after establishing the main result, Theorem \ref{T:main1}, we wanted to see whether it would be possible to improve our ideas and eliminate the current convergence hypothesis, thus solving (\textbf{Q1}) and (\textbf{Q2}). To do so, we used ChatGPT 6-Sol to find a plausible example of a lacunary weight $\beta$ satisfying \eqref{eq:zero-liminf-limsup-infty} for which $\operatorname{HS}(\beta) = \operatorname{HS}(\mathbf{1})$. ChatGPT proposed such an example making use of Paley's result \cite{Paley}, thus confirming that the convergence hypothesis in Theorem~\ref{T:main1} is crucial. Afterwards, Theorem \ref{thm:counterexample} and the content of Section \ref{sec:proof2} were obtained pushing the ideas behind the construction of this example.

To conclude, all mathematical statements, proofs, and references were independently checked, revised, and finalized by the authors, who take full responsibility for the content of the paper. No part of this manuscript has been autonomously written by any AI tool.

\end{document}